\numberwithin{equation}{section}
\numberwithin{table}{section}
\numberwithin{figure}{section}
\newtheorem{definition}{Definition}[section]
\newtheorem{assumption}{Assumption}[section]
\newtheorem{theorem}{Theorem}[section]
\newtheorem{lemma}{Lemma}[section]
\newtheorem{proposition}{Proposition}[section]
\newcommand{\brackets}[1]{\left({#1}\right)}
\newcommand{\R}{\mathbb{R}} 
\newcommand{\N}{\mathbb{N}} 
\newcommand{\C}{\mathbf{C}} 
\newcommand{\fmm}[1]{\bar{\mathcal{#1}}}
\newcommand{\fm}[1]{\bar{#1}}
\newcommand{\tfm}[1]{\tilde{#1}}
\newcommand{\tfmm}[1]{\tilde{\mathcal{#1}}}
\begin{document}

\title{
A Note on Many-server Fluid Models with Time-varying Arrivals}
\author{
 Zhenghua Long \ and \ Jiheng Zhang\\
 The Hong Kong University of Science and Technology\\
 \{\href{mailto:zlong@connect.ust.hk}{zlong@conncet.ust.hk},
 \href{mailto:jiheng@ust.hk}{jiheng@ust.hk}\}
}
\date{}

\maketitle

\begin{abstract}
We extend the measure-valued fluid model, which tracks residuals of patience and service times, to allow for time-varying arrivals.
The fluid model can be characterized by a one-dimensional convolution equation involving both the patience and service time distributions.
We also make an interesting connection to the measure-valued fluid model tracking the elapsed waiting and service times.
Our analysis shows that the two fluid models are actually characterized by the same one-dimensional convolution equation.
\end{abstract}

\emph{Key words: many-server queue, fluid model, time-varying, abandonment}

\section{Introduction}

There has been increasing interest in developing and analyzing fluid models of many-server queues with general service and patience time distributions since the pioneering work \cite{Whitt2006}. As an example of how powerful the fluid model approach is that it can be used to approximate a system with dependent service and patience times, see  \cite{BassambooRandhawa2016,WBP2018}.
The research community has developed measure-valued processes and two-parameter processes to describe the system dynamics due to the generality of the distributions.
Existing studies can be divided into two categories. The first  tracks the elapsed waiting and service times of all customers in the system, see \cite{Whitt2006} and \cite{KangRamanan2010}. The second  tracks the residual patience and service times, see \cite{Zhang2013}.

The first line of works is represented by \cite{KangRamanan2010}, which is based on \cite{KaspiRamanan2011} on the model without abandonment.
\cite{KangRamanan2010} requires rather complicated conditions on the hazard rate of the distributions (see Assumption~3.3 in \cite{KangRamanan2010}). \cite{Zuniga2014} extends \cite{KangRamanan2010} by relaxing their assumptions.
However, both in \cite{KangRamanan2010} and \cite{Zuniga2014}, the existence of a solution to the fluid model is proved using stochastic approximation.

The fluid model tracking elapsed times is also developed in \cite{LiuWhitt2011a,LiuWhitt2012}, which adapt the approach in  \cite{Whitt2006} to allow the number of servers and service/patience time distributions to vary with time. 
Moreover, they provide a direct analysis on the fluid model tracking elapsed times to obtain existence and uniqueness by assuming two key assumptions: (i) the system alternates between overloaded and underloaded intervals, and (ii) the functions specifying the fluid model are suitably smooth.
The direct analysis on the fluid model tracking elapsed times is also studied in \cite{Kang2014}, which assumes that the service time distribution has a density and the hazard rate function of the patience time distribution is locally bounded.

In the second line of works tracking residual times, \cite{Zhang2013} directly proves the existence and uniqueness of the many-server fluid model with a constant arrival rate only requiring continuity of the service time distribution and Lipschitz continuity of the patience time distribution. Moreover, it builds the foundation to prove the convergence to the equilibrium state in \cite{LongZhang2014}.
However, the modeling approach in \cite{Zhang2013} seems a bit inflexible as extending the analysis of the fluid model with a constant arrival rate to time-varying arrival rates is not that straightforward.
Another downside of \cite{Zhang2013} is the condition on initial state of the queue, which assumes that initial customers are those who arrived in the past following an arrival process with the same arrival rate.

This paper extends the measure-valued fluid model tracking residual times in \cite{Zhang2013}, where a fluid model is studied for the $G/GI/n+GI$ queue, to allow for time-varying arrivals. In this paper, we  focus on the study of the fluid model of many-server
queues with time-varying arrival rates, and general service and patience
time distributions. The queueing model is denoted by $G_t/GI/n+GI$. The $G_t$
represents a general time-varying arrival process. The first $GI$ indicates
that service times are independent and identically distributed (i.i.d.) with a general distribution.
The $n$ denotes the number of homogeneous servers.  There is an unlimited
waiting space, called the buffer, where customers wait to be served according
to the first-come-first-served (FCFS) discipline. Customers are only allowed
to abandon if their patience times expire before their service starts. Again,
the patience times are i.i.d. and with a general distribution (the second
$GI$).  

We also provide a unified approach to study the two types of fluid models tracking elapsed times and residual times. We show that both types of fluid models are characterized by a same convolution equation, which is proved to  possess a unique solution. Thus, both types of fluid models are alternative to approximate the original stochastic processes. We  address the following open issues regarding the
fluid model.
\begin{enumerate}
\item Can we extend the measure-valued fluid model of \cite{Zhang2013}, which tracks residuals, to allow for time-varying arrivals?
\item What is the fundamental mathematical law driving the dynamics of both types of  fluid models (tracking elapsed and residual times)?
\end{enumerate}

We aim to address these two questions by first extending \cite{Zhang2013} to allow for time-varying arrivals in Section~\ref{sec:FM-residual}.
We derive a one-dimensional convolution equation \eqref{eq:key-equation-time-varying} as the key insight of the fluid model in Section~\ref{sec:analysis}, where several properties of the fluid model are also developed.
Analysis of the key equation \eqref{eq:key-equation-time-varying} is presented in Section~\ref{sec:key-equation}.
We identify a connection between the initial conditions required by both fluid models when analyzing the fluid model tracking elapsed times in Section~\ref{sec:FM-elapsed}.
We show that \eqref{eq:key-equation-time-varying} also serves as the  foundation of the fluid model tracking elapsed times.

\section{The Fluid Model Tracking Residual Times}
\label{sec:FM-residual}

Let $\R$ denote the set of real numbers and $\R_+=[0,\infty)$. For $a,b\in\R$, write $a^+$ for the positive part of $a$ and $a\wedge b$ for the minimum. For convenience of notation, define $C_x=(x,\infty)$. We append a bar sign on processes to indicate that they are fluid model processes and to be consistent with the notations in \cite{Zhang2013}.

We consider a fluid model of the $G_t/GI/n+GI$
queue  with time-varying arrival process
\begin{equation}
\label{def:Et-fluid}
  \fm{E}(t)=\int_0^t\lambda(s)ds,
  \quad \lambda(\cdot)\ge 0.
\end{equation}
For $t<0$, let $\lambda(t)$ be the arrival rate of the fluid arriving before time 0.
Following the modeling approach in \cite{Zhang2013}, we introduce the \emph{virtual buffer} which holds all the fluid that has not yet scheduled to enter service even when their patience is exhausted. 
When the fluid is admitted to service, the system will check whether the fluid has positive remaining patience time or not. Only the fluid with positive remaining patience time will enter service, otherwise it will abandon the system.
Thus, the fluid in the virtual buffer is allowed to have negative remaining patience time. 
For any time $t\in[0,\infty)$, let $\fmm{R}(t)(C_x)$ denote the amount of fluid in the  virtual buffer with remaining patience time larger than $x\in\R$; and $\fmm{Z}(t)(C_x)$ denote the amount of fluid in service with remaining service time larger than $x\ge 0$. We assume customers' patience times and service times are mutually independent and follow the distributions $F$ and $G$, respectively. See \cite{BassambooRandhawa2016,WBP2018} for the study of dependent  service and patience time distributions.

Denote by $\fm{R}(t)$, $\fm{Q}(t)$ and $\fm{Z}(t)$ the amount of
 fluid in the virtual buffer, in the queue and in service at time $t$, respectively. Then they can be recovered from $\fmm{R}$ and $\fmm{Z}$ as follows
\begin{align} 
   \label{eq:def-R-Q-Z-Zhang}
  \fm{R}(t)=\fmm{R}(\R), \quad
  \fm{Q}(t)=\fmm{R}(t)(C_0) \quad \text{and}\quad
  \fm{Z}(t)=\fmm{Z}(t)(C_0),
\end{align}
where $C_0=(0,\infty)$ since the fluid in the queue or in service cannot have 0 remaining times.
Let $\fm{X}(t)=\fm{Q}(t)+\fm{Z}(t)$ denote the total fluid content in the system.
We assume that the initial fluid arrives at some negative time $t\in(-\infty,0)$.
So our arrival process $\fm{E}(t)$ extends to the negative axis and we introduce $\omega(t)$ as the solution to
\begin{equation*}
  \fm{R}(t)
  = \int_{t-\omega(t)}^t d\fm{E}(s).
\end{equation*}
Intuitively,  $\omega(t)$ can be considered
as the  waiting time of the earliest arrived fluid in the virtual buffer
at time $t$. And $t-\omega(t)$ can be thought of as the arrival time of the earliest arrived fluid in the virtual buffer at time $t$.
We also introduce
\begin{equation*}
  \fm{B}(t)=\fm{E}(t)-\fm{R}(t),
\end{equation*}
and $\fm{B}(t)-\fm{B}(s)$ can be regarded as the fluid content {leaving} the virtual buffer during the time interval $(s,t]$.
Note that the processes $\fm{R}$, $\fm{Q}$, $\fm{Z}$ and $\fm{X}$ are all derived directly from the measure-valued process $(\fmm{R},\fmm{Z})$, and the processes $\omega$ and $\fm{B}$ are derived by combining the measure-valued process and the arrival process. The fluid model is defined as follows.

\begin{definition}[The fluid model tracking residual]
  \label{def:fluid-models-tracking-residual}
  The fluid model $\{\fmm{R}(t),\fmm{Z}(t)\}$ satisfies the dynamic equations
  \begin{align}
    \label{def:fmmRt-Zhang}
    \fmm{R}(t)(C_x)
    &=\int_{t-\omega(t)}^t F^c(x+t-s)d\fm{E}(s),\quad x\in\R,\\
    \label{def:fmmZt-Zhang}
    \fmm{Z}(t)(C_x)
    &=\fmm{Z}(0)(C_{x+t})
    +\int_0^t F^c\left( \omega(s)\right)G^c(x+t-s)d\fm{B}(s),\quad x\in\R_+,
\end{align}
and the non-idling constraints
\begin{align}
    \label{nonidlingQ-Zhang}
    \fm{Q}(t) &= (\fm{X}(t)-1)^+,\\
    \label{nonidlingZ-Zhang}
    \fm{Z}(t) &= \fm{X}(t)\wedge 1.
\end{align}
Moreover, the initial state $(\fmm{R}(0),\fmm{Z}(0))$ satisfying \eqref{def:fmmRt-Zhang} and \eqref{def:fmmZt-Zhang} at time $t=0$ has no atoms.
\end{definition}

The intuition behind the above definition resembles that of (3.1)--(3.2) in \cite{Zhang2013}. The difference is that $\omega(t)$ simply reduces to $\fm{R}(t)/\lambda$ when the arrival rate is constant and equals $\lambda$.
We want to emphasize here that the dynamic equations \eqref{def:fmmRt-Zhang}--\eqref{def:fmmZt-Zhang} implicitly assume the FCFS policy. 
In general, only specifying the remaining patience times in {the} queue does not give a full picture of the status of the queue. 
For example, assuming there are two customers with remaining patience times 1 and 10 in {the} queue, the measure does not tell us who is the first in {the} queue. To overcome this issue, we incorporate FCFS into the dynamic equations. For any $s\in[t-\omega(t),t]$, among the infinitesimal amount of arriving fluid $d\fm{E}(s)$, the fraction of remaining patience time larger than $x$ at time $t$ is $F^c(x+t-s)d\fm{E}(s)$ as shown in \eqref{def:fmmRt-Zhang}.
By the definition of $\fm{B}$ and $\omega$, it is easy to see that
\begin{align}
    \label{eq:Bt-Et-omega-t}
    \fm{B}(t) = \fm{E}(t-\omega(t)).
\end{align}
The infinitesimal amount of fluid $d\fm{B}(t)$ that is about to enter service at time $t$ actually arrived at time $t-\omega(t)$.  Only a fraction $F^c(\omega(t))$, with the original patience time larger than the waiting time $\omega(t)$, actually enters service. This is characterized by \eqref{def:fmmZt-Zhang}.

For direct analysis of the fluid model, we need the following assumption on  the service and patience time distributions throughout this paper.

\begin{assumption}
  \label{assump:GF}
  The service time distribution $G$ is continuous with finite mean $1/\mu$, and the patience time distribution $F$ is Lipschitz continuous.
\end{assumption}

\subsection{Properties and Analysis}
\label{sec:analysis}

\paragraph{Preliminary analysis.}
We first perform some preliminary analysis to arrive at the key equation \eqref{eq:key-equation-time-varying}.
It follows from \eqref{def:fmmRt-Zhang} that
\begin{align}
    \fm{Q}(t)&=\fmm{R}(t)(C_0)\nonumber\\
    \label{eq:def-Qt-omegat}
             &=\int_{t-\omega(t)}^{t}F^c(t-s)\lambda(s)ds
              =\int_0^{\omega(t)} F^c(s)\lambda(t-s)ds.
\end{align}
For any $t\ge 0$, introduce two new functions
\begin{align}
  \label{eq:def-Ft-f-lambda-t-s}
  F_t(x) &= \int_0^x f(s)\lambda(t-s)ds, \\
  \label{eq:def-Fdt}
  F_{d,t}(x) &= \int_0^x F^c(s)\lambda(t-s)ds,
\end{align}
where $f(x)=(d/dx)F(x)$ exists since every Lipschitz continuous function is absolutely continuous (Page~112 in \cite{Royden1988}).
The domain for both functions is $x\in[0,t+\omega(0)]$ since the fluid model at $t$ only depends on the arrival process from time $-\omega(0)$ to time $t$. For any $t\geq 0$, denote by $N_{F,t}$ the maximum value of $F_{d,t}(\cdot)$; \begin{align}
    \label{eq:def-N-Ft}
    N_{F,t}=F_{d,t}(t+\omega(0)).
\end{align}
Using \eqref{eq:def-Fdt}, \eqref{eq:def-Qt-omegat} becomes
\begin{align}
    \label{eq:Qt-Fdt}
    \fm{Q}(t)=F_{d,t}(\omega(t)).
\end{align}
It follows from \eqref{def:fmmZt-Zhang} and \eqref{eq:Bt-Et-omega-t} that
\begin{align*}
  \fm{Z}(t) &= \fmm{Z}(t)(C_0)\nonumber\\
  &= \fmm{Z}(0)(C_t)+\int_0^t F^c(\omega(s))G^c(t-s)d\fm{E}(s-\omega(s)).
\end{align*}
Since a monotone function is of bounded variation, it  follows from Lemma~\ref{lem:at-nonnegative} and \eqref{nonidlingQ-Zhang} that $\fm{Q}(t)$ is also of bounded variation. Thus, applying the chain rule to \eqref{eq:def-Qt-omegat} gives
\begin{align}
  \label{eq:derivative-qt-second-equation}
  d\fm{Q}(t) &=\lambda(t)dt
               -F^c(\omega(t))\lambda(t-\omega(t))d(t-\omega(t))
               -\int_{t-\omega(t)}^t f(t-s)\lambda(s)ds \nonumber\\
             &=\lambda(t)dt
               -F^c(\omega(t))d\fm{E}(t-\omega(t))
               -F_t(\omega(t)) dt,
\end{align}
so that
\begin{align*}
  \fm{Z}(t) = \fmm{Z}(0)(C_t)
              +\int_0^t G^c(t-s)\left[ \lambda(s)-F_s(\omega(s)) \right]ds
              -\int_0^t G^c(t-s)d\fm{Q}(s).
\end{align*}
Performing change of variable and integration by parts, we have
\begin{align}
  \label{eq:pre-key-equation}
  \begin{split}
    \fm{Z}(t) &= \fmm{Z}(0)(C_t)
    + \frac{1}{\mu}\int_0^t \lambda(t-s)-F_{t-s}(\omega(t-s))dG_e(s)\\
    &\quad - \fm{Q}(t) + \fm{Q}(0)G^c(t)+\int_0^t \fm{Q}(t-s)dG(s),
  \end{split}
\end{align}
where $G_e(\cdot)$ is the equilibrium distribution associated with $G$ defined as
\begin{align}
\label{eq:Geequilibrium}
  G_e(x)=\mu\int_0^x G^c(y)dy.
\end{align}
Based on \eqref{eq:def-Ft-f-lambda-t-s} and \eqref{eq:def-Fdt}, we introduce the following function for all $t\geq 0$,
\begin{align}
  \label{eq:def-Ht}
  H_t(y) =
  \begin{cases}
    \lambda(t)-F_t(F_{d,t}^{-1}(y)),  &\text{if }0\leq y< N_{F,t},\\
    \lambda(t)-F_t(F_{d,t}^{-1}(N_{F,t})),  &\text{if }y\geq N_{F,t},
  \end{cases}
\end{align}
where  $N_{F,t}$ is defined in \eqref{eq:def-N-Ft} and $F_{d,t}^{-1}(y)=\inf\{x\geq 0: F_{d,t}(x)\geq y\}$ for all $y\in[0,N_{F,t}]$.
{By \eqref{nonidlingQ-Zhang} and \eqref{eq:Qt-Fdt}, $\omega(t)=F_{d,t}^{-1}((\fm{X}(t)-1)^+)$. Combining this with \eqref{nonidlingQ-Zhang}, \eqref{eq:pre-key-equation} and \eqref{eq:def-Ht}},  we obtain the following \emph{key equation}
\begin{align}
  \label{eq:key-equation-time-varying}
  \begin{split}
    \fm{X}(t)
    &= \fmm{Z}(0)(C_t)+\fm{Q}(0)G^c(t)\\
    &\quad +\frac{1}{\mu}\int_0^t H_{t-s}((\fm{X}(t-s)-1)^+)dG_e(s)
    +\int_0^t(\fm{X}(t-s)-1)^+dG(s).
  \end{split}
\end{align}

\paragraph{Existence and uniqueness of a solution to the fluid model.}
It follows from the proof of Theorem~3.1 in \cite{Zhang2013} that there is a one-to-one correspondence between the measure-valued process $(\fmm{R},\fmm{Z})$ and the one-dimensional process $\fm{X}$. Thus, the existence and uniqueness of a solution to the fluid model in Definition~\ref{def:fluid-models-tracking-residual} is equivalent to the existence and uniqueness of the solution to the key equation \eqref{eq:key-equation-time-varying}, which is proved in Proposition~\ref{pro:key-equation} in Section~\ref{sec:key-equation}. Below we immediately have the following theorem.

\begin{theorem}[Existence and uniqueness]
  \label{thm:E_and_U}
  Under Assumption~\ref{assump:GF}, there exists a unique solution to the measure-valued fluid model $\{\fmm{R}(t),\fmm{Z}(t)\}$ in \eqref{def:fmmRt-Zhang}--\eqref{nonidlingZ-Zhang}.
\end{theorem}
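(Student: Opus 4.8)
The plan is to reduce \thref{thm:E_and_U} entirely to the one-dimensional key equation \eqref{eq:key-equation-time-varying}, for which \proref{pro:key-equation} supplies existence and uniqueness, and then to establish a one-to-one correspondence between the measure-valued pair $(\fmm{R},\fmm{Z})$ and the scalar process $\fm{X}$, in the spirit of the proof of Theorem~3.1 of \cite{Zhang2013}. I would carry this out in two directions.

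\emph{From the fluid model to the key equation.} Given any solution $(\fmm{R},\fmm{Z})$ of Definition~\ref{def:fluid-models-tracking-residual}, the preliminary analysis of Section~\ref{sec:analysis} already shows, step by step, that $\fm{X}=\fm{Q}+\fm{Z}$ solves \eqref{eq:key-equation-time-varying}: the only ingredients are the dynamic equations \eqref{def:fmmRt-Zhang}--\eqref{def:fmmZt-Zhang}, the non-idling constraints \eqref{nonidlingQ-Zhang}--\eqref{nonidlingZ-Zhang}, the identity \eqref{eq:Bt-Et-omega-t}, the bounded-variation property of $\fm{Q}$ (from \leref{lem:at-nonnegative} and \eqref{nonidlingQ-Zhang}), and the chain-rule and integration-by-parts manipulations in \eqref{eq:derivative-qt-second-equation}--\eqref{eq:pre-key-equation}. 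Hence any two fluid-model solutions induce the same $\fm{X}$ by \proref{pro:key-equation}, which yields uniqueness once the correspondence below is in place.

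\emph{From the key equation back to the fluid model.} Conversely, let $\fm{X}$ be the unique solution of \eqref{eq:key-equation-time-varying}. Put $\fm{Q}(t)=(\fm{X}(t)-1)^+$ and $\fm{Z}(t)=\fm{X}(t)\wedge 1$, so that \eqref{nonidlingQ-Zhang}--\eqref{nonidlingZ-Zhang} hold automatically; set $\omega(t)=F_{d,t}^{-1}(\fm{Q}(t))$ with the generalized inverse appearing in \eqref{eq:def-Ht}, then $\fm{B}(t)=\fm{E}(t-\omega(t))$, and finally define $\fmm{R}(t)$ and $\fmm{Z}(t)$ through the right-hand sides of \eqref{def:fmmRt-Zhang} and \eqref{def:fmmZt-Zhang}. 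What must then be checked is internal consistency: that $\fmm{R}(t)(C_0)$ coincides with the prescribed $\fm{Q}(t)$, which amounts to $F_{d,t}(F_{d,t}^{-1}(y))=y$ on $[0,N_{F,t}]$ together with \eqref{eq:Qt-Fdt}; that $\fmm{Z}(t)(C_0)$ coincides with $\fm{Z}(t)$, which is exactly the chain \eqref{eq:pre-key-equation}--\eqref{eq:key-equation-time-varying} read backwards; and that $(\fmm{R}(0),\fmm{Z}(0))$ has no atoms, which follows from continuity of $F$ and $G$ under Assumption~\ref{assump:GF}.

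\emph{Main obstacle.} The delicate point is the well-posedness of the map $\fm{X}\mapsto\omega$ and its compatibility with \eqref{def:fmmRt-Zhang}. Since $F_{d,t}$ is only non-decreasing --- it is flat on any interval where $\lambda(t-\cdot)$ vanishes --- its generalized inverse is discontinuous, so one must verify that $\omega(t)$ is well defined and finite (in particular that the range constraint $\fm{Q}(t)\le N_{F,t}$ holds) and that $F_{d,t}(\omega(t))=\fm{Q}(t)$ survives these flats. One also needs the continuity of $\fm{X}$ furnished by \proref{pro:key-equation} to legitimize \eqref{eq:derivative-qt-second-equation} and the integration by parts in \eqref{eq:pre-key-equation} when run in reverse. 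Once these points are settled, the theorem is immediate.
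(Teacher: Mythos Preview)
Your proposal is correct and follows essentially the same approach as the paper: reduce the measure-valued problem to the scalar key equation \eqref{eq:key-equation-time-varying}, invoke \proref{pro:key-equation} for its well-posedness, and rely on a one-to-one correspondence between $(\fmm{R},\fmm{Z})$ and $\fm{X}$. The paper's own proof is in fact a single paragraph that cites Theorem~3.1 of \cite{Zhang2013} for this correspondence and then appeals to \proref{pro:key-equation}; you have simply written out what that correspondence entails in both directions. The ``main obstacle'' you flag---the range constraint $\fm{Q}(t)\le N_{F,t}$ needed to make $\omega(t)=F_{d,t}^{-1}(\fm{Q}(t))$ well defined---is handled by \leref{lem:queue-upbound-lemma}, and the identity $F_{d,t}(F_{d,t}^{-1}(y))=y$ holds on $[0,N_{F,t}]$ because $F_{d,t}$ is continuous (it is an integral).
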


\paragraph{Time shift of the fluid model.}
For any $\tau\geq 0$, denote $(\fmm{R}_\tau(t),\fmm{Z}_{\tau}(t))=(\fmm{R}(\tau+t),\fmm{Z}(\tau+t))$.
The time shift for all the derived ``status'' quantities such as $\omega_\tau(\cdot)$, $\fm{R}_\tau(\cdot)$,
$\fm{Q}_\tau(\cdot)$, $\fm{Z}_\tau(\cdot)$ and $\fm{X}_\tau (\cdot)$ is defined in the same way, e.g., $\omega_\tau(t)=\omega(\tau+t)$. However, the time shift for the ``cumulative'' process $\fm{E}_\tau(t)$ is defined as $\fm{E}_\tau(t)=\fm{E}({\tau+t})-\fm{E}(\tau)$ (similarly for $\fm{B}_\tau(\cdot)$).
If we think of the arrival rate, then $\fm{E}'_\tau(s)=\lambda_{\tau}(s)=\lambda(\tau+s)$.
The following proposition shows that the fluid model can be ``restarted'' at time $\tau>0$ by viewing $(\fmm{R}(\tau),\fmm{Z}(\tau))$ as the initial condition.

\begin{proposition}[Time-shifted fluid model]
    \label{lem:time-shift-fluid-model}
    The time-shifted fluid solution $(\fmm{R}_\tau(t),\fmm{Z}_\tau(t))$ satisfies
\begin{align}
    \label{def:fmmRt-Zhang-shift}
    &\fmm{R}_\tau(t)(C_x)
    =\int_{t-\omega_\tau(t)}^t F^c(x+t-s)d\fm{E}_\tau(s),\quad x\in\R,\\
    \label{def:fmmZt-Zhang-shift}
    &\fmm{Z}_\tau(t)(C_x)
    =\fmm{Z}(\tau)(C_{x+t})
     +\int_0^t F^c\left( \omega_\tau(s)\right)G^c(x+t-s)d\fm{B}_\tau(s),\quad x\in\R_+.
\end{align}
And the shifted key equation becomes
\begin{align}
\label{eq:shifted-key-equation-tau}
  \begin{split}
  \fm{X}_\tau(t)&=\fmm{Z}(\tau)(C_t)+\fm{Q}(\tau)G^c(t)\\
  &\quad+\frac{1}{\mu}\int_0^t H_{\tau+t-s}((\fm{X}_\tau(t-s)-1)^+)dG_e(s)
  +\int_0^t (\fm{X}_{\tau}(t-s)-1)^+dG(s).
  \end{split}
\end{align}
\end{proposition}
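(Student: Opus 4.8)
The plan is to establish the two shifted dynamic equations \eqref{def:fmmRt-Zhang-shift}--\eqref{def:fmmZt-Zhang-shift} directly from the original ones in Definition~\ref{def:fluid-models-tracking-residual} by a change of time variable, and then to re-run the preliminary analysis of Section~\ref{sec:analysis} verbatim on the shifted data to obtain the shifted key equation \eqref{eq:shifted-key-equation-tau}. First I would prove \eqref{def:fmmRt-Zhang-shift}. Starting from \eqref{def:fmmRt-Zhang} evaluated at $\tau+t$,
\begin{align*}
  \fmm{R}_\tau(t)(C_x)=\fmm{R}(\tau+t)(C_x)=\int_{\tau+t-\omega(\tau+t)}^{\tau+t}F^c(x+\tau+t-u)d\fm{E}(u),
\end{align*}
substitute $u=\tau+s$. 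Since $\omega_\tau(t)=\omega(\tau+t)$ by definition, the limits become $s\in[t-\omega_\tau(t),t]$, the integrand becomes $F^c(x+t-s)$, and $d\fm{E}(\tau+s)=d\fm{E}_\tau(s)$ because $\fm{E}_\tau$ differs from $s\mapsto\fm{E}(\tau+s)$ only by the constant $\fm{E}(\tau)$; this yields \eqref{def:fmmRt-Zhang-shift}. The same substitution in \eqref{def:fmmZt-Zhang}, using $\fmm{Z}(\tau+t)(C_x)=\fmm{Z}(0)(C_{x+\tau+t})+\int_0^{\tau+t}F^c(\omega(r))G^c(x+\tau+t-r)d\fm{B}(r)$ and splitting the integral at $r=\tau$, gives a first piece $\fmm{Z}(0)(C_{x+\tau+t})+\int_0^\tau F^c(\omega(r))G^c(x+\tau+t-r)d\fm{B}(r)$, which equals $\fmm{Z}(\tau)(C_{x+t})$ by \eqref{def:fmmZt-Zhang} at time $\tau$ with $x$ replaced by $x+t$; and a second piece which, after $r=\tau+s$ and the identity $d\fm{B}(\tau+s)=d\fm{B}_\tau(s)$, becomes $\int_0^t F^c(\omega_\tau(s))G^c(x+t-s)d\fm{B}_\tau(s)$. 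This establishes \eqref{def:fmmZt-Zhang-shift}.

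Next I would derive the shifted key equation. The cleanest route is to observe that \eqref{def:fmmRt-Zhang-shift}--\eqref{def:fmmZt-Zhang-shift} together with the non-idling constraints \eqref{nonidlingQ-Zhang}--\eqref{nonidlingZ-Zhang} (which hold for the shifted quantities since they are just the original ones at time $\tau+t$) form a system of exactly the same form as in Definition~\ref{def:fluid-models-tracking-residual}, with arrival rate $\lambda_\tau(s)=\lambda(\tau+s)$ on $s\geq-\omega_\tau(0)$ and initial data $(\fmm{R}(\tau),\fmm{Z}(\tau))$. Re-running the preliminary analysis, the functions in \eqref{eq:def-Ft-f-lambda-t-s}--\eqref{eq:def-Fdt} built from $\lambda_\tau$ satisfy $F_t^{(\tau)}(x)=\int_0^x f(s)\lambda_\tau(t-s)ds=\int_0^x f(s)\lambda(\tau+t-s)ds=F_{\tau+t}(x)$, and similarly $F_{d,t}^{(\tau)}(x)=F_{d,\tau+t}(x)$, hence $H_t^{(\tau)}(y)=H_{\tau+t}(y)$ for the corresponding $H$ from \eqref{eq:def-Ht}. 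Feeding these identities through \eqref{eq:pre-key-equation} and \eqref{eq:key-equation-time-varying} produces exactly \eqref{eq:shifted-key-equation-tau}, with the $H_{\tau+t-s}$ in the convolution term and $\fmm{Z}(\tau)(C_t)$, $\fm{Q}(\tau)G^c(t)$ as the free terms.

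The one point requiring genuine care — the main obstacle — is verifying that the shifted data $(\fmm{R}(\tau),\fmm{Z}(\tau))$ is an admissible initial condition for the preliminary analysis, specifically that it inherits the regularity used there: that $\fmm{Z}(\tau)$ has no atoms (needed so that the chain-rule manipulation leading to \eqref{eq:derivative-qt-second-equation} and the integration by parts in \eqref{eq:pre-key-equation} go through), and that $\omega_\tau(\cdot)$ retains the monotonicity/bounded-variation properties invoked via \leref{lem:at-nonnegative}. The no-atom property at time $\tau$ follows from \eqref{def:fmmZt-Zhang} since $\fmm{Z}(\tau)(C_x)=\fmm{Z}(0)(C_{x+\tau})+\int_0^\tau F^c(\omega(s))G^c(x+\tau-s)d\fm{B}(s)$ is continuous in $x$ by continuity of $G$ (Assumption~\ref{assump:GF}) and the absence of atoms in $\fmm{Z}(0)$. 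The structural properties of $\omega_\tau$ are immediate from $\omega_\tau(t)=\omega(\tau+t)$ and the corresponding properties of $\omega$ already established for the original fluid model. Once these admissibility checks are in place, the derivation is a line-by-line repetition of Section~\ref{sec:analysis} with $t$ replaced by $\tau+t$ in the time-dependent coefficients, so I would not belabor the routine algebra.
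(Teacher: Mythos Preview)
Your derivation of \eqref{def:fmmRt-Zhang-shift} and \eqref{def:fmmZt-Zhang-shift} by the change of variable $u=\tau+s$ and by splitting the $\fm{B}$-integral at $\tau$ is exactly the paper's argument. For the shifted key equation \eqref{eq:shifted-key-equation-tau}, however, you and the paper take different routes. You treat the shifted system as a fresh instance of Definition~\ref{def:fluid-models-tracking-residual} with arrival rate $\lambda_\tau$ and initial data $(\fmm{R}(\tau),\fmm{Z}(\tau))$, then re-run the preliminary analysis of Section~\ref{sec:analysis} and identify $H_t^{(\tau)}=H_{\tau+t}$. The paper instead verifies \eqref{eq:shifted-key-equation-tau} by direct substitution: it expresses $\fmm{Z}(\tau)(C_t)$ via \eqref{def:fmmZt-Zhang} with $(t,x)$ replaced by $(\tau,t)$, plugs this into the right-hand side of \eqref{eq:shifted-key-equation-tau}, and checks that the result collapses to the right-hand side of the original key equation \eqref{eq:key-equation-time-varying} evaluated at $\tau+t$. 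Your approach is more structural and explains \emph{why} the shift works; the paper's is a self-contained calculation that sidesteps any admissibility discussion for the new initial data. One caution if you write up your route: the claimed identity $H_t^{(\tau)}(y)=H_{\tau+t}(y)$ is not literally true for all $y$, because the two functions carry different thresholds, $N_{F,t}^{(\tau)}=F_{d,\tau+t}(t+\omega(\tau))$ versus $N_{F,\tau+t}=F_{d,\tau+t}(\tau+t+\omega(0))$, and agree only on $[0,N_{F,t}^{(\tau)}]$. This is harmless since the solution stays in that range (by $\omega(\tau+t)\le t+\omega(\tau)$, i.e.\ \leref{lem:t-omegat-nondecreasing}, or the shifted analog of \leref{lem:queue-upbound-lemma}), but you should say so explicitly rather than assert the identity globally.
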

\begin{proof}
Replacing $t$ in \eqref{def:fmmRt-Zhang} by $\tau+t$ gives
\begin{align*}
    \fmm{R}_{\tau}(t)(C_x) &= \fmm{R}(\tau+t)(C_x)\\
    &= \int_{\tau+t-\omega(\tau+t)}^{\tau+t}
       F^c(x+\tau+t-s)d\fm{E}(s)\\
    &= \int_{t-\omega(\tau+t)}^t F^c(x+t-s)d\fm{E}(\tau+s),
\end{align*}
where the last equation is due to change of variable. This implies \eqref{def:fmmRt-Zhang-shift} by using the definition of the time shift.
Similarly, replacing $t$ by $\tau+t$ in \eqref{def:fmmZt-Zhang} yields
\begin{align*}
  \fmm{Z}_\tau(t)(C_x) &= \fmm{Z}(\tau+t)(C_x)\\
  &= \fmm{Z}(0)(C_{x+\tau+t}) +\int_0^{\tau+t} F^c(\omega(s))G^c(x+\tau+t-s)d\fm{B}(s)\\
  &= \fmm{Z}(0)(C_{x+\tau+t})
     +\int_0^{\tau}F^c(\omega(s))G^c(x+\tau+t-s)d\fm{B}(s)\\
     &\quad +\int_\tau^{\tau+t} F^c(\omega(s))G^c(x+\tau+t-s)d\fm{B}(s)\\
  &= \fmm{Z}(\tau)(C_{x+t}) +\int_0^t F^c(\omega(\tau+s))G^c(x+t-s)d\fm{B}(\tau+s),
\end{align*}
which implies \eqref{def:fmmZt-Zhang-shift} by the definition of the time shift.

Replacing $(t,x)$ in \eqref{def:fmmZt-Zhang} by $(\tau,t)$ yields
\begin{align*}
    \fmm{Z}(\tau)(C_t)=\fmm{Z}(0)(C_{\tau+t})
    +\int_0^\tau F^c\left( \omega(s)\right)G^c(\tau+t-s)d\fm{B}(s).
\end{align*}
Combining the above with \eqref{eq:Bt-Et-omega-t}, \eqref{eq:derivative-qt-second-equation} and \eqref{eq:def-Ht}, we can verify that
\begin{align*}
    \fmm{Z}(\tau)(C_t)&=\fmm{Z}(0)(C_{\tau+t})
           +\frac{1}{\mu}\int_t^{\tau+t} H_{\tau+t-s}(\fm{Q}(\tau+t-s)dG_e(s)\\
         &\quad -\fm{Q}(\tau)G^c(t)+\fm{Q}(0) G^c(\tau+t)
                +\int_t^{\tau+t}\fm{Q}(\tau+t-s)dG(s).
\end{align*}
Thus, the right-hand side of \eqref{eq:shifted-key-equation-tau} becomes
\begin{align*}
    &\fmm{Z}(0)(C_{\tau+t})+\fm{Q}(0)G^c(\tau+t)
    +\frac{1}{\mu}\int_0^{\tau+t}H_{\tau+t-s}(\fm{Q}(\tau+t-s))dG_e(s)\\
    &+\int_{0}^{\tau+t}\fm{Q}(\tau+t-s) dG(s),
\end{align*}
which equals $\fm{X}(\tau+t)$ by \eqref{eq:key-equation-time-varying}. Thus \eqref{eq:shifted-key-equation-tau} follows by applying the time-shift definition.
\end{proof}

\paragraph{Special case with a constant arrival rate.}
We specialize the time-varying arrival rate to be constant, i.e., $\lambda(\cdot)\equiv\lambda$.
It can be seen from Lemma~\ref{lem:queue-upbound-lemma} that any solution to \eqref{eq:key-equation-time-varying} satisfies
\begin{align*}
  (\fm{X}(t)-1)^+ \leq
   \lambda\int _0^{t+\omega(0)} F^c(s)ds
  \quad\text{for all }t\geq 0.
\end{align*}
It follows from \eqref{eq:def-Ht} that for any $t\geq 0$,  $H_t(y)=\lambda
H(y)$ for all $y\in [0, \lambda\int_0^{t+\omega(0)} F^c(s)ds]$, where
\begin{align*}
  H(y) &=
  \begin{cases}
    F^c\brackets{F_d^{-1}\brackets{\frac{y}{\lambda}}}, & \text{if }0\leq y
<\lambda N_F,\\
    0, & \text{if } y\geq \lambda N_F,
  \end{cases}
\end{align*}
with $F_d(x)=\int_0^x F^c(s)ds$ and $N_F=\int_0^\infty F^c(s)ds$.
Thus we can replace $H_{t-s}(\cdot)$ in \eqref{eq:key-equation-time-varying}
by $\lambda H(\cdot)$ and obtain the following key equation for this special case:
\begin{equation*}
  \fm{X}(t) = \fmm{Z}(0)(C_t)+\fm{Q}(0)G^c(t)
  +\frac{\lambda}{\mu}\int_0^t H((\fm{X}(t-s)-1)^+)dG_e(s)
  +\int_0^t(\fm{X}(t-s)-1)^+dG(s),
\end{equation*}
which is consistent with the key equation (4.6) in \cite{Zhang2013}.

\paragraph{Balance equations} Regarding the last term in \eqref{def:fmmZt-Zhang}, we introduce an auxiliary process
\begin{align*}
  \fm{A}(t) = \int_0^t F^c(\omega(s))d\fm{B}(s),
\end{align*}
which can be interpreted as the {amount of fluid that} actually enters service.
By \eqref{eq:Bt-Et-omega-t}, \eqref{eq:Qt-Fdt}, \eqref{eq:derivative-qt-second-equation} and \eqref{eq:def-Ht}, the auxiliary process can be written as
\begin{align*}
  \fm{A}(t) = \int_0^t H_s(\fm{Q}(s))ds-\fm{Q}(t)+\fm{Q}(0).
\end{align*}
Denote by $\fm{L}(t)$ the \emph{abandonment} process, which can be derived from the following balance equation of the physical queue,
\begin{align*}
  \fm{Q}(t) = \fm{Q}(0)+\fm{E}(t)-\fm{L}(t)-\fm{A}(t).
\end{align*}
From the above two equations, we get
\begin{align*}
  \fm{L}(t)
            = \fm{E}(t)-\int_0^t H_s(\fm{Q}(s))ds.
\end{align*}
Using \eqref{eq:def-Ft-f-lambda-t-s}, \eqref{eq:Qt-Fdt} and \eqref{eq:def-Ht} yields
\begin{align*}
  \fm{L}(t) = \int_0^t\int_0^{\omega(s)} f(x)\lambda(s-x)dxds.
\end{align*}

According to the fluid dynamic equation \eqref{def:fmmZt-Zhang},
\begin{align*}
  \fm{Z}(t) = \text{
                     {$\fmm{Z}(0)(C_t)$}}
              +\int_0^t G^c(t-s)d\fm{A}(s).
\end{align*}
Then the \emph{service completion} process, denoted by $\fm{S}(t)$, can be derived from the following balance equation of the server pool,
\begin{align*}
  \fm{Z}(t) = \fm{Z}(0)+\fm{A}(t)-\fm{S}(t).
\end{align*}
That is
\begin{align*}
  \fm{S}(t) = \fmm{Z}(0)((0,t])
              +\int_0^t G(t-s)d\fm{A}(s).
\end{align*}
It is clear that the balance equation of the fluid content in the
system satisfies
\begin{align*}
    \fm{X}(t)=\fm{X}(0)+\fm{E}(t)-\fm{L}(t)-\fm{S}(t).
\end{align*}

 Note that the introduced processes $\fm{A}$, $\fm{L}$, $\fm{S}$ and the balance equations are not needed in the definition and analysis of the fluid model. We only provide them here for completeness and potential future use.

\section{The One-dimensional Convolution Equation}
\label{sec:key-equation}

We analyze the key equation \eqref{eq:key-equation-time-varying} in this section.  
Denote by {$\C[0,\infty)$} the space of continuous functions on the interval {$[0,\infty)$}. The following Proposition~\ref{pro:key-equation} showing the existence and uniqueness of the solution to \eqref{eq:key-equation-time-varying} is the main result of this paper. The auxiliary Lemmas~\ref{lem:at-nonnegative}--\ref{le:conse-tau}, which also reveal some additional properties of the solution to \eqref{eq:key-equation-time-varying}, are placed in the appendix.
\begin{proposition}
  \label{pro:key-equation}
  There exists a unique solution {$\fm{X}\in{\C}[0,\infty)$}
to \eqref{eq:key-equation-time-varying}.
\end{proposition}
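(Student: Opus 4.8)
The plan is to prove existence and uniqueness of $\fm{X}\in\C[0,\infty)$ solving \eqref{eq:key-equation-time-varying} by a standard Picard/Banach fixed-point argument applied on successive bounded time intervals, exploiting the fact that \eqref{eq:key-equation-time-varying} is a Volterra-type convolution equation with a kernel that is not only bounded but also non-atomic (since $G$ is continuous, hence $G_e$ is absolutely continuous with bounded density $\mu G^c$, and $G$ itself has no jumps). First I would fix $T>0$ and work in the Banach space $\C[0,T]$ with the sup-norm. Define the operator $\Phi$ by letting $(\Phi \psi)(t)$ be the right-hand side of \eqref{eq:key-equation-time-varying} with $\fm{X}$ replaced by $\psi$; I must first check that $\Phi$ maps $\C[0,T]$ into itself, which uses continuity of $t\mapsto\fmm{Z}(0)(C_t)$ (the initial measure has no atoms), continuity of $G^c$, and continuity of the two convolution integrals in $t$ — the latter following from dominated convergence together with boundedness of $H_{t}$ (uniformly in $t$, since $0\le H_t(y)\le\lambda(t)$ is bounded on the relevant compact time window by local boundedness/integrability of $\lambda$, and one should confirm the regularity of $t\mapsto H_t(\cdot)$ needed here, or simply bound crudely).

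The heart of the argument is a Lipschitz/contraction estimate. The map $y\mapsto (y-1)^+$ is $1$-Lipschitz, so the last term $\int_0^t(\psi(t-s)-1)^+dG(s)$ is $1$-Lipschitz in $\psi$ with respect to a weighted sup-norm, but the constant $1$ is exactly the borderline that prevents a naive contraction on all of $[0,T]$; this is where I expect the main obstacle. The standard fix is to use an exponentially weighted norm $\|\psi\|_\beta=\sup_{t\le T}e^{-\beta t}|\psi(t)|$: since $dG$ is a probability measure and, more importantly, $G$ is \emph{continuous}, one shows $\int_0^t e^{-\beta s}dG(s)\to 0$ uniformly in $t\le T$ as $\beta\to\infty$ (continuity of $G$ kills the mass near $s=0$), which makes the contribution of the $dG$ term to the Lipschitz constant strictly less than $1$ for $\beta$ large. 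The $dG_e$ term contributes a factor $\tfrac{1}{\mu}\cdot(\text{Lipschitz const. of } H_t(\cdot))\cdot\int_0^t e^{-\beta s}dG_e(s)$, which likewise vanishes as $\beta\to\infty$ provided $H_t(\cdot)$ is Lipschitz uniformly in $t$; here I would invoke Assumption~\ref{assump:GF} ($F$ Lipschitz, so $f$ bounded, hence $F_t$ is Lipschitz with constant $\|f\|_\infty\sup|\lambda|$) together with a lower bound on the ``speed'' of $F_{d,t}$ away from $0$ — or, to avoid issues where $F_{d,t}^{-1}$ has infinite slope, instead observe that $H_t$ as a function of the argument $(\fm{X}-1)^+$ composed through $F_{d,t}^{-1}$ is exactly $\lambda(t)-F_t(\omega)$ expressed via $\fm{Q}=F_{d,t}(\omega)$, and bound the composite Lipschitz constant of $y\mapsto H_{t}(y)$ by $\sup_s f(s)/F^c(\text{something})$ on compacts; alternatively one can simply note $H_t$ is monotone and bounded and first prove existence via a compactness/Schauder argument, then uniqueness separately via a Grönwall estimate. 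I would choose the weighted-norm contraction route since it yields both at once: for $\beta$ sufficiently large $\Phi$ is a contraction on $(\C[0,T],\|\cdot\|_\beta)$, so it has a unique fixed point on $[0,T]$.

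Finally, to pass from $[0,T]$ to $[0,\infty)$ I would note that $T$ was arbitrary and the solutions on nested intervals agree by uniqueness (the restriction of the solution on $[0,T']$ to $[0,T]$ solves the equation on $[0,T]$), so they patch together to a unique $\fm{X}\in\C[0,\infty)$; here it is convenient to use the time-shift structure of Proposition~\ref{lem:time-shift-fluid-model}, which shows the equation ``restarts'' cleanly, though strictly it is not needed. Throughout I would also want to confirm that the fixed point is genuinely a \emph{fluid solution}, i.e. that it stays nonnegative and that $(\fm{X}(t)-1)^+$ lies in the domain $[0,N_{F,t}]$ where $H_{t}$ was defined by the first branch — this is where Lemmas~\ref{lem:at-nonnegative} and \ref{lem:queue-upbound-lemma} (the a priori bound $(\fm{X}(t)-1)^+\le\int_0^{t+\omega(0)}F^c(s)\lambda(t-s)ds\le N_{F,t}$) enter, guaranteeing the truncation in the second branch of \eqref{eq:def-Ht} is never active and hence that any $\C[0,\infty)$ solution of the key equation corresponds to a bona fide solution of the fluid model. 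The only real subtlety, and the step I would write most carefully, is the uniform-in-$t$ Lipschitz bound on $y\mapsto H_t((y-1)^+)$ on the relevant compact range of arguments; everything else is routine Volterra fixed-point bookkeeping.
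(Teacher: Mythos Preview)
Your weighted-norm contraction route has a genuine obstacle that you underestimate. The derivative of $H_t$ on $[0,N_{F,t})$ is $-f(F_{d,t}^{-1}(y))/F^c(F_{d,t}^{-1}(y))$, and since $F_{d,t}^{-1}$ ranges over $[0,t+\omega(0)]$, the Lipschitz constant of $H_t(\cdot)$ is governed by $L_F/F^c(t+\omega(0))$. When $F$ has bounded support $S_F<\infty$ (admissible under Assumption~\ref{assump:GF}: take $F$ uniform on $[0,1]$), this blows up as $t+\omega(0)\uparrow S_F$ and $H_t$ is not Lipschitz at all beyond; for instance with $F$ uniform and $\lambda\equiv1$ one gets $H_t(y)=\sqrt{1-2y}$ near $y=N_{F,t}=1/2$. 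No choice of $\beta$ in the weighted norm can absorb an unbounded Lipschitz constant, because the $dG_e$ term in your estimate carries $L(t-s)$ with $t-s$ near $t$ when $s$ is near $0$, precisely where $e^{-\beta s}\approx1$. The a priori bound $(\fm{X}(t)-1)^+\le N_{F,t}$ from Lemma~\ref{lem:queue-upbound-lemma} does not rescue this, since $N_{F,t}$ is exactly the endpoint where $H_t$ loses Lipschitz continuity.

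This is why the paper does \emph{not} use a single contraction. It splits into two regimes. When $\fm{X}(0)\le1$ (so $\omega(0)=0$) it runs an ordinary sup-norm contraction on a short interval $[0,b]$ with $b\le S_F/2$, where $L_F/F^c(b)$ is finite. When $\fm{X}(0)>1$, contraction is abandoned entirely: one observes that $a(t):=\int_0^t H_s(q(s))\,ds-q(t)+q(0)$ satisfies a \emph{linear} renewal equation in $G$ (solved explicitly), after which existence of $q$ given $a$ follows from an ODE result, and uniqueness uses only the \emph{monotonicity} of $H_t(\cdot)$ via the Lyapunov function $\mathcal L(t)=(q_1(t)-q_2(t))^2$, whose derivative is nonpositive---no Lipschitz bound required. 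The two regimes are stitched together by the time-shift Lemma~\ref{le:conse-tau}, which replaces $H_{\tau+t}$ by a truncation $\tilde H_{\tau+t}$ whose effective domain corresponds to $F_{d,\tau+t}^{-1}(y)\le t$ (not $\tau+t+\omega(0)$), so the Lipschitz constant is again $L_F/F^c(t)$, independent of $\tau$ and $\omega(0)$; this restriction is justified by Lemma~\ref{lem:t-omegat-nondecreasing}. Your alternative suggestion (existence by compactness, uniqueness by monotonicity) is much closer to what actually works in the overloaded regime; the weighted-norm route you chose does not close without these additional ingredients.
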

\begin{proof}
We first prove that there exists a number $b>0$ and a unique continuous function $\fm{X}(t)$ satisfies \eqref{eq:key-equation-time-varying} when $0\le t\le b$. And then we extend the solution indefinitely. According to the value of $\fm{X}(0)$, we consider the following two cases.

\noindent
{\textbf{Case 1}}: $\fm{X}(0)\leq 1$. This implies $\omega(0)=0$ by \eqref{eq:def-Qt-omegat}. 
Deduce from Lemma~\ref{lem:queue-upbound-lemma} that for all $t\geq 0$,
\begin{align}
    \label{eq:pro_queue_upperbound_initial}
    (\fm{X}(t)-1)^+
    \leq \int_0^t F^c(s)
    \lambda(t-s)ds
    =N_{F,t}.
\end{align}
Let $M$ be any strictly positive number and $S_F=\inf\{x\geq 0: F(x)=1\}$.
 From \eqref{eq:def-Ht} the following derivative is bounded for all $t\in[0,\frac{S_F\wedge M}{2}]$:
\begin{align*}
  \frac{d}{dy}{H}_{t}(y)
  = -\frac{f(F_{d,t}^{-1}(y))}{F^c(F_{d,t}^{-1}(y))}
  \geq -\frac{L_F}{F^c(t)}
  \geq - \frac{L_F}{F^c(\frac{S_F\wedge M}{2})},
  \quad \text{if }0\leq y \le N_{F,t},
\end{align*}
where $L_F$ is denoted to be the Lipschitz constant of $F$ by Assumption~\ref{assump:GF}. So we can pick $b_1=\frac{S_F\wedge M}{2}$ and then for any $t\in[0,b_1]$ the function ${H}_{t}(\cdot)$ in \eqref{eq:key-equation-time-varying} is Lipschitz continuous. Let $L=\frac{L_F}{F^c(\frac{S_F\wedge M}{2})}$ be the Lipschitz constant.
By Assumption~\ref{assump:GF}, there exists a $b_2>0$ such that
\begin{align*}
  \kappa := 
  \frac{1}{\mu}L[G_e(b_2)-G_e(0)]+[G(b_2)-G(0)]<1.
\end{align*}
Let $b=\min\{b_1,b_2\}$.
For any $x\in{C}[0,b]$, define 
\begin{align*}
  \Psi(x)(t) &= \fmm{Z}(0)(C_t)+\fm{Q}(0)G^c(t)
             +\frac{1}{\mu}\int_0^t \text{{${H}_{t-s}$}}((x(t-s)-1)^+)dG_e(s)
               +\int_0^t(x(t-s)-1)^+dG(s).
\end{align*}
It is clear that $\Psi(x)(t)$ is continuous in $t$, so $\Psi$ is a mapping from $\C[0,b]$ to $\C[0,b]$.
Let $\rho(x,x')=\sup_{t\in[0,b]}|x(t)-x'(t)|$ denote the uniform distance between two functions in $\C[0,b]$.
For any $x,x'\in C[0,b]$, we have
\begin{align*}
    \rho(\Psi(x),\Psi(x'))
    &\leq \sup_{t\in[0,b]} \frac{1}{\mu}\int_0^t
          L|(x(t-s)-1)^+-(x'(t-s)-1)^+|dG_e(s)\\
    &\quad + \sup_{t\in[0,b]}\int_0^t
           |(x(t-s)-1)^+-(x'(t-s)-1)^+| dG(s)\\
    &\leq \frac{1}{\mu} L\int_0^{b} \rho(x,x')dG_e(s)
          +\int_0^{b}\rho(x,x')dG(s)\\
    &\leq \kappa \rho(x,x').
\end{align*}
Since $\kappa<1$, $\Psi$ is a contraction mapping on $\C[0,b]$ under the uniform topology $\rho$.
Note that $\C[0,b]$ is complete under the uniform topology of $\rho$ (cf.~p.~80 in \cite{Billingsley1999}).
Thus, by the contraction mapping theorem (e.g., Theorem~3.2 in \cite{HunterNach2001}), $\Psi$ has a unique fixed point $x$, i.e., $x=\Psi(x)$.
This proves {that \eqref{eq:key-equation-time-varying}} has a unique solution on $[0,b]$ in this case. 

\noindent
{\textbf{Case~2}}: $\fm{X}(0)>1$. Due to the continuity of the solution to \eqref{eq:key-equation-time-varying} (if there is any) proved in Lemma~\ref{lem:xt-continuous-function}, there exists $b_3>0$ such that
\begin{align}
    \label{eq:proof-Xt-1}
    \fm{X}(t)\geq  1 \quad\text{for all }t\in[0,b_3].
\end{align}
For notational simplicity, denote $q(t)=(\fm{X}(t)-1)^+$ and
\begin{align}
    \label{eq:case2-proof-non-decreasing_initial}
    a(t)=\int_0^t H_{s}(q(s))ds-q(t)+q(0).
\end{align}
For $t\in[0,b_3]$, \eqref{eq:proof-At-renewal-non-decreasing} obtained in the proof of Lemma~\ref{lem:at-nonnegative} becomes
\begin{align*}
    {a}(t)=1-\fmm{Z}(0)(C_t)+\int_0^t {a}(t-s)dG(s).
\end{align*}
Let $G^{n*}$ be the $n$-fold convolution of $G$ with itself, and denote $U_G(t)=\sum_{i=0}^\infty G^{n*}$. The solution to the above renewal equation is
\begin{align*}
  a(t)
  = \int_0^t {(}
  1-\fmm{Z}(0)(C_{t-s})
  {)}
  dU_G(s),
  \quad t\in[0,b_3].
\end{align*}
It is clear that $a(t)$ is continuous.
Since $H_{t}(\cdot)$ is continuous, with a known ${a}(t)$ there exists a continuous solution $q(t)$ to the equation \eqref{eq:case2-proof-non-decreasing_initial} following from Theorem~II.1.1 in \cite{Miller1971}. 

Next we prove the uniqueness. Assume that ${q}_1(t)$ and ${q}_2(t)$ satisfy \eqref{eq:case2-proof-non-decreasing_initial} on the interval $[0,b_3]$.  Let
\begin{align*}
    \mathcal{L}(t):=(q_1(t)-q_2(t))^2,
    \quad t\in[0,b_3].
\end{align*}
Then, on the interval $[0,b_3]$ we can see from \eqref{eq:case2-proof-non-decreasing_initial}
that
\begin{align*}
    \mathcal{L}'(t)=2[q_1(t)-q_2(t)]
                     [H_{t}(q_1(t))-H_{t}(q_2(t))]
                  \leq 0,
\end{align*}
where the last inequality is due to the fact that $H_{t}(\cdot)$ is non-increasing from \eqref{eq:def-Ht}. Thus
$\mathcal{L}(t)$ is non-increasing on $[0,b_3]$. Since $\mathcal{L}(0)=0$ and
$\mathcal{L}(t)\geq 0$, $\mathcal{L}(t)=0$ for all $t\in[0,b_3]$.
Hence
\begin{align*}
    q_1(t)=q_2(t)\quad\text{for all }t\in[0,b_3].
\end{align*}
Thus  \eqref{eq:key-equation-time-varying} only has one
solution on the interval $[0,b_3]$. By Corollary II.2.6 in \cite{Miller1971}, we can further extend the solution to a point $\tau>0$, where $\fm{X}(\tau)=1$.  If there is no such a finite
time point, the existence and uniqueness immediately follow. Otherwise, starting from $\tau$, we can apply a similar argument as the above Case~1 to extend the solution to an extra interval with length $b$. Since the argument involves the time-shifted fluid model equation \eqref{eq:shifted-key-equation-tau}, we provide a rigorous proof in Lemma~\ref{le:conse-tau}.  As a result, we can at least get the unique solution of \eqref{eq:key-equation-time-varying} on the interval $[0,b]$ in this case.

Combing the above two cases yields that there exists a unique continuous function $\fm{X}(t)$ satisfying \eqref{eq:key-equation-time-varying} when $0\le t\le b$. Here, the definition of $b$ is same as the one in Lemma~\ref{le:conse-tau}. Thus, applying Lemma~\ref{le:conse-tau} consecutively at $\tau=b, 2b, \cdots$,
we can extend the existence
and uniqueness to $[kb,(k+1)b]$, $k=1,2,\cdots$ to the whole interval $[0,\infty)$,
proving the result.
\end{proof}

\section{The Fluid Model Tracking Elapsed Times}
\label{sec:FM-elapsed}

We now present the fluid model tracking elapsed times following earlier works in this direction, e.g., \cite{KangRamanan2010} and \cite{LiuWhitt2012}. 
Let $\fmm{R}_a(t)([0,x])$ denote the amount of fluid in the \emph{potential queue} with elapsed waiting time no larger than $x$. A potential queue holds all the fluid that has arrived but has not yet abandoned, no matter whether it has entered service or not. 
Note that the virtual buffer is employed in the fluid model tracking the residual times, and the potential queue is used in the fluid model tracking elapsed times. 
Let ${\fmm{Z}}_a(t)([0,x])$ denote the amount of fluid in the server pool with elapsed service time no larger than $x$. 
The head count processes of fluid amount in the potential queue, in the queue and in service can be recovered from $\fmm{R}_a$ and $\fmm{Z}_a$ as follows
\begin{align*}
    \fm{R}_a(t)=\fmm{R}_a(t)([0,\infty)),\quad
    \fm{Q}(t)=\fmm{R}_a(t)([0,\omega(t)])\quad \text{and}\quad
    \fm{Z}(t)=\fmm{Z}_a(t)([0,\infty)),
\end{align*}
where, as in Section~\ref{sec:FM-residual}, $\omega(t)$ represents the waiting time of the earliest arrived fluid content in the physical queue. 
For convenience of notations, let $r(t,x)$ and $z(t,x)$ be the densities of the measures $\fmm{R}_a(t)$ and $\fmm{Z}_a(t)$, respectively. In details, $r(t,x)=(d/dx)\fmm{R}_a(t)([0,x])$ and $z(t,x)=(d/dx){\fmm{Z}}_a(t)([0,x])$, which exist almost everywhere since $\fmm{R}_a(t)([0,x])$ and $\fmm{Z}_a(t)([0,x])$ are non-decreasing in $x$ (see \cite{Royden1988}, Page~100).
We have the following definition for the fluid model tracking elapsed times.

\begin{definition}[The fluid model tracking elapsed times]
  \label{def:FM-elapsed}
  The fluid model $\{\fmm{R}_a(t),\fmm{Z}_a(t)\}$ satisfies the following dynamic equations
  \begin{align}
    \label{measurevaluedQ}
    \fmm{R}_a(t)([0,x]) &= \int_0^{(x-t)^+}\frac{F^c(s+t)}{F^c(s)}r(0,s)ds
                           +\int_{(t-x)^+}^t F^c(t-s)d\fm{E}(s), \quad x\in\R_+,\\
    \label{measurevaluedB}
    \fmm{Z}_a(t)([0,x]) &= \int_{0}^{(x-t)^+}\frac{G^c(s+t)}{G^c(s)}z(0,s)ds
                           +\int_{(t-x)^+}^tG^c(t-s)d\fm{A}(s), \quad x\in\R_+,
  \end{align}
  where $\fm{A}(s)$ is the {amount of fluid that} enters service {by time $s$}.
  Moreover,  the abandonment process, denoted by $\fm{L}(t)$,
satisfies  \begin{align}
    \label{def:abandon-process-WKK}
    \fm{L}(t) = \int_0^t\left( \int_0^{\omega(s)}\frac{f(x)}{1-F(x)}r(s,x)dx \right)ds,
  \end{align}
  where $f$ is the density function of $F$.
  The fluid model needs to satisfy the balance equation
  \begin{align}
    \label{eq:balance-Qt-WKK}
    \fm{Q}(t)=\fm{Q}(0)+\fm{E}(t)-\fm{L}(t)-\fm{A}(t),
  \end{align}
  and the non-idling constraints \eqref{nonidlingQ-Zhang}--\eqref{nonidlingZ-Zhang}.
\end{definition}

Note that when $x\geq t$, the fluid content in the potential queue with elapsed waiting time less than or equal to $x$ consists of two parts: initial fluid in the queue with age $s\in[0,(x-t)^+]$ at time $0$ and fluid that arrived during $[(t-x)^+,t]$. 
For the initial fluid content, only a fraction $F^c(s+t)/F^c(s)$ of the infinitesimal amount of fluid ${r}(0,s)ds$ would still be in the potential queue at time $t$. For the fluid that  arrived at time $s\in[(t-x)^+,t]$, a proportion $F^c(t-s)$ of the infinitesimal amount $d\fm{E}(s)$ will not reach its patience time at time $t$. 
When $x<t$, the fluid content in the potential queue with elapsed waiting time less than or equal to $x$ only consists the fluid arriving at $s\in[(t-x)^+,t]$ and the explanation is exactly the same. 
The explanations for $\tfmm{Z}(t,x)$ and \eqref{def:abandon-process-WKK} are similar. We refer to \cite{KangRamanan2010} and \cite{LiuWhitt2012} for more detailed discussions on the intuition behind this definition. 

It is worth pointing out that the waiting time $\omega(t)$, abandonment process $\fm{L}(t)$ and the balance equation \eqref{eq:balance-Qt-WKK} are needed in Definition~\ref{def:FM-elapsed}, while they are derived from the model defined in Definition~\ref{def:fluid-models-tracking-residual}. The reason is that the same measure $(\fmm{R}_a(t),\fmm{Z}_a(t))$ at time $t$ could represent two different states if we are given two different $\omega(t)$. However, for the fluid model in Section~\ref{sec:FM-residual}, we can uniquely determine $\omega(t)$ once the measure $\fmm{R}(t)$ is given. Of course, this hinges on the validity of the initial condition. In the following, we show a connection between the initial conditions in both types of fluid models.

\paragraph{Correspondence between initial conditions}
Given any initial state $(r(0,\cdot),z(0,\cdot))$ in the fluid model tracking elapsed times, we can construct a corresponding initial state in the fluid model tracking residual times. 
Let
\begin{align}
    \fmm{R}(0)(C_x)
    &:=\int_{-\omega(0)}^0 F^c(x-s)\lambda(s)ds ,\quad x\in\R,\nonumber\\
\label{eq:correspondence-initial-condition-Z}
    \fmm{Z}(0)(C_x)
    &:=\int_{0}^{\infty}\frac{G^c(s+x)}{G^c(s)}z(0,s)ds,\quad x\in\R_+,
\end{align}
where $\lambda(s)$, $s\in (-\omega(0),0)$, is set to be
\begin{align}
    \label{eq:correspondence-initial-condition}
    \lambda(s)=\frac{r(0,-s)}{F^c(-s)}
\end{align}
and can be regarded as the arrival rate of the fluid arriving before time 0.
It is easy to see that the initial state $(\fmm{R}(0),\fmm{Z}(0))$ satisfies \eqref{def:fmmRt-Zhang}
and \eqref{def:fmmZt-Zhang} at time $t=0$. 

\begin{proposition}[Identical key equation]
    The measure-valued fluid model $\{\fmm{R}_a(t),\fmm{Z}_a(t)\}$ that tracks elapsed times is characterized via the same key equation  \eqref{eq:key-equation-time-varying}.
\end{proposition}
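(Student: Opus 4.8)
The plan is to verify that, under the initial-condition correspondence \eqref{eq:correspondence-initial-condition-Z}--\eqref{eq:correspondence-initial-condition}, the one-dimensional process $\fm{X}(t)=\fm{Q}(t)+\fm{Z}(t)$ derived from any solution $\{\fmm{R}_a(t),\fmm{Z}_a(t)\}$ of Definition~\ref{def:FM-elapsed} satisfies the key equation \eqref{eq:key-equation-time-varying}, and conversely that from any continuous solution of \eqref{eq:key-equation-time-varying} one can reconstruct such a pair $\{\fmm{R}_a(t),\fmm{Z}_a(t)\}$; this one-to-one correspondence is the content of the proposition and, combined with Proposition~\ref{pro:key-equation}, also yields existence and uniqueness for the elapsed-time model. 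I would begin with the queue term. Evaluating \eqref{measurevaluedQ} at $x=\omega(t)$, substituting $r(0,s)=F^c(s)\lambda(-s)$ from \eqref{eq:correspondence-initial-condition} into the initial-fluid integral, and changing variables $s\mapsto -s$ turns that integral into $\int_{-(\omega(t)-t)^+}^{0}F^c(t-u)\lambda(u)\,du$; adding the arrival integral $\int_{(t-\omega(t))^+}^{t}F^c(t-s)\lambda(s)\,ds$ and checking the two cases $\omega(t)\ge t$ and $\omega(t)<t$ separately, everything collapses to $\int_{t-\omega(t)}^{t}F^c(t-s)\lambda(s)\,ds=F_{d,t}(\omega(t))$, i.e.\ exactly \eqref{eq:def-Qt-omegat} and \eqref{eq:Qt-Fdt}. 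Together with the non-idling constraint \eqref{nonidlingQ-Zhang} this gives $\omega(t)=F_{d,t}^{-1}((\fm{X}(t)-1)^+)$, the same relation used in Section~\ref{sec:FM-residual}.

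Next I would identify the process $\fm{A}$. Differentiating \eqref{measurevaluedQ} in $x$ and again using \eqref{eq:correspondence-initial-condition} gives the density $r(t,x)=F^c(x)\lambda(t-x)$ for a.e.\ $x$, with the pieces $x<t$ and $x>t$ producing the same formula. Plugging this into \eqref{def:abandon-process-WKK}, the factor $f(x)/(1-F(x))$ cancels against $F^c(x)$, leaving $\fm{L}(t)=\int_0^t\int_0^{\omega(s)}f(x)\lambda(s-x)\,dx\,ds$, which is precisely the abandonment process obtained in the balance-equation discussion of Section~\ref{sec:FM-residual}. The balance equation \eqref{eq:balance-Qt-WKK} then yields $\fm{A}(t)=\fm{Q}(0)+\int_0^t H_s(\fm{Q}(s))\,ds-\fm{Q}(t)$, the same auxiliary process as in Section~\ref{sec:FM-residual}; here I would use the bound $\fm{Q}(s)\le N_{F,s}$ from Lemma~\ref{lem:queue-upbound-lemma} together with \eqref{eq:Qt-Fdt} and \eqref{eq:def-Ht} to rewrite $H_s(\fm{Q}(s))=\lambda(s)-F_s(\omega(s))$, and the bounded-variation property of $\omega$ (from Lemma~\ref{lem:at-nonnegative}) to justify writing $d\fm{A}(s)=H_s(\fm{Q}(s))\,ds-d\fm{Q}(s)$.

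Then I would evaluate $\fm{Z}(t)=\fmm{Z}_a(t)([0,\infty))$ by letting $x\to\infty$ in \eqref{measurevaluedB}: the initial term equals $\fmm{Z}(0)(C_t)$ by \eqref{eq:correspondence-initial-condition-Z}, so $\fm{Z}(t)=\fmm{Z}(0)(C_t)+\int_0^t G^c(t-s)\,d\fm{A}(s)$, exactly the representation of $\fm{Z}$ used in Section~\ref{sec:FM-residual}. Substituting $d\fm{A}(s)=H_s(\fm{Q}(s))\,ds-d\fm{Q}(s)$, converting $G^c(s)\,ds=\frac{1}{\mu}\,dG_e(s)$, performing the change of variable $s\mapsto t-s$ in the drift term, and integrating the $d\fm{Q}$ term by parts reproduces \eqref{eq:pre-key-equation} verbatim; adding $\fm{Q}(t)$ to both sides and using $\fm{Q}(s)=(\fm{X}(s)-1)^+$ together with $\lambda(s)-F_s(\omega(s))=H_s((\fm{X}(s)-1)^+)$ gives \eqref{eq:key-equation-time-varying}. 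For the converse, given $\fm{X}\in\C[0,\infty)$ solving \eqref{eq:key-equation-time-varying} I would define $\fm{Q}=(\fm{X}-1)^+$, $\omega(t)=F_{d,t}^{-1}(\fm{Q}(t))$, $\fm{A}$ by the same formula, and then $(\fmm{R}_a,\fmm{Z}_a)$ through \eqref{measurevaluedQ}--\eqref{measurevaluedB}, and verify Definition~\ref{def:FM-elapsed} by running the computations above backwards, in particular checking $\fmm{R}_a(t)([0,\omega(t)])=\fm{Q}(t)$ and $\fmm{Z}_a(t)([0,\infty))=\fm{Z}(t)$ so that the non-idling constraints hold.

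I expect the main obstacle to lie in the second step: extracting the density $r(t,x)$ from \eqref{measurevaluedQ} cleanly, which requires separating the regimes $x<t$ and $x>t$, handling the boundary case $\omega(t)=t+\omega(0)$ (equivalently $\fm{Q}(t)=N_{F,t}$) where $F_{d,t}^{-1}$ saturates, and checking integrability of $r$ under only Assumption~\ref{assump:GF}; and then confirming that the hazard-rate abandonment integral \eqref{def:abandon-process-WKK} indeed collapses to its residual-time counterpart. Once $\fm{Q}$, $\fm{L}$ and $\fm{A}$ are matched, the passage to \eqref{eq:key-equation-time-varying} is the same change-of-variable and integration-by-parts bookkeeping already carried out in Section~\ref{sec:FM-residual}, so no new difficulty arises there.
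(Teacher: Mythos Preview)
Your approach is essentially the same as the paper's: both substitute the initial-condition correspondence into \eqref{measurevaluedQ} to get $\fm{Q}(t)=F_{d,t}(\omega(t))$, extract the density $r(t,x)=F^c(x)\lambda(t-x)$, collapse the hazard-rate abandonment integral \eqref{def:abandon-process-WKK} to $\fm{L}(t)=\fm{E}(t)-\int_0^t H_s(\fm{Q}(s))\,ds$, recover $\fm{A}$ from the balance equation, and then run the same change of variable and integration by parts to arrive at \eqref{eq:key-equation-time-varying}. The paper's proof is terser and only records the forward direction; your added converse reconstruction is a natural complement, though not part of the paper's argument.

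One small caution: you justify $\fm{Q}(s)\le N_{F,s}$ and the bounded variation of $\fm{Q}$ by citing Lemma~\ref{lem:queue-upbound-lemma} and Lemma~\ref{lem:at-nonnegative}, but those lemmas are stated for solutions of \eqref{eq:key-equation-time-varying}, which is exactly what you are in the process of establishing for the elapsed-time model---so the appeal is formally circular. The fix is easy and is what the paper does implicitly: the bound $\fm{Q}(s)=F_{d,s}(\omega(s))\le F_{d,s}(s+\omega(0))=N_{F,s}$ follows directly from $\omega(s)\le s+\omega(0)$, which is built into the elapsed-time model (no fluid in the system has waited longer than $s+\omega(0)$), and the paper simply writes the Stieltjes integral $\int_0^t G^c(t-s)\,d\fm{A}(s)$ and manipulates it without invoking Lemma~\ref{lem:at-nonnegative}. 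With that adjustment your argument matches the paper's.
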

\begin{proof} 
It follows from \eqref{eq:correspondence-initial-condition} that for all $x>t$ the measure-valued process \eqref{measurevaluedQ} becomes
\begin{align}
    \fmm{R}_a(t)([0,x])&=\int_0^{x-t}\frac{F^c(s+t)}{F^c(s)}{r}(0,s)ds
                  +\int_{0}^t F^c(t-s)d\fm{E}(s)\nonumber\\
                &=\int_0^{x-t} F^c(s+t)\lambda(-s)ds
                  +\int_0^t F^c(t-s)d\fm{E}(s)\nonumber\\
                  \label{measurevaluedQ-lambda}
                &=\int_{t-x}^{t} F^c(t-s)d \fm{E}(s).
\end{align}
It can also be seen from \eqref{measurevaluedQ} that \eqref{measurevaluedQ-lambda}  still holds for all $x\leq t$.
From the above we have
\begin{align}
    \fm{Q}(t)&=\fmm{R}_a(t)([0,\omega(t)])\nonumber\\
    \label{eq:qt-age-derive}
            &=\int_{t-\omega(t)}^t F^c(t-s)d\fm{E}(s)
             =\int_0^{\omega(t)}F^c(s)\lambda(t-s)ds\nonumber\\
            &=F_{d,t}(\omega(t)),
\end{align}
where the last equation follows from the definition of $F_{d,t}(\cdot)$ in \eqref{eq:def-Fdt}.
From \eqref{measurevaluedQ-lambda}, we obtain $r(t,x)=F^c(x)\lambda(t-x)$. Thus \eqref{def:abandon-process-WKK} becomes
\begin{align*}
    \fm{L}(t)&=\int_0^t\int_0^{\omega(s)}
              f(x)\lambda(s-x)dxds\\
             &=\fm{E}(t)-\int_0^t H_s(\fm{Q}(s))ds,
\end{align*}
where the last equation follows from $H_s(\cdot)$ in \eqref{eq:def-Ht} and \eqref{eq:qt-age-derive}. Combining this with \eqref{eq:balance-Qt-WKK} yields
\begin{align*}
    \fm{A}(t)=\int_0^t H_s(\fm{Q}(s))ds-\fm{Q}(t)+\fm{Q}(0).
\end{align*}
Plugging $x=\infty$ and the above into \eqref{measurevaluedB} then combining with \eqref{eq:correspondence-initial-condition-Z}, we have
\begin{align*}
    \fm{Z}(t)=\fmm{Z}(0)(C_t)+\int_0^t G^c(t-s)H_s(\fm{Q}(s))ds
               -\int_0^t G^c(t-s)d\fm{Q}(s),
\end{align*}
Performing change of variable and integration by parts, we have
\begin{align*} 
    \fm{X}(t)
             &=\fmm{Z}(0)(C_t)+\fm{Q}(0)G^c(t)
              +\frac{1}{\mu}\int_0^t H_{t-s}((\fm{X}(t-s)-1)^+)dG_e(s)
              +\int_0^t(\fm{X}(t-s)-1)^+dG(s),
\end{align*}
which is exactly the same as the key equation in \eqref{eq:key-equation-time-varying}.
\end{proof}   

\appendix
\section{Auxiliary Lemmas}

\begin{lemma}
    \label{lem:at-nonnegative}
If there is any function $\fm{X}(t)$ satisfying \eqref{eq:key-equation-time-varying}, then
\begin{align*}
    \int_0^t H_s((\fm{X}(s)-1)^+)ds-(\fm{X}(t)-1)^+
    +(\fm{X}(0)-1)^+
\end{align*}
is non-decreasing.
\end{lemma}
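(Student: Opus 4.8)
The plan is to recognize the quantity in question as the auxiliary process $\fm{A}(t)$ (evaluated with $q(t)=(\fm{X}(t)-1)^+$ in place of $\fm{Q}(t)$), and to show it satisfies a renewal equation whose right-hand side is manifestly non-decreasing. First I would set $q(t)=(\fm{X}(t)-1)^+$ and write
\begin{align*}
  a(t) := \int_0^t H_s(q(s))\,ds - q(t) + q(0),
\end{align*}
so that the claim is exactly that $a(\cdot)$ is non-decreasing. The key algebraic step is to plug the key equation \eqref{eq:key-equation-time-varying} into the definition of $a$: since $\fm{X}(t) = \fm{Z}(t)+q(t)$ whenever we identify $\fm{Z}(t)=\fm{X}(t)\wedge 1$ and $\fm{Q}(t)=q(t)$ via the non-idling constraints, and since \eqref{eq:pre-key-equation} already expresses $\fm{Z}(t)$ in terms of $\fmm{Z}(0)(C_t)$, a $G_e$-convolution of $\lambda-F_\cdot(\omega(\cdot))$, and a $G$-convolution of $\fm{Q}$, one can rearrange \eqref{eq:key-equation-time-varying} into the form
\begin{align*}
  a(t) = \big(1 - \fmm{Z}(0)(C_t)\big) + \int_0^t a(t-s)\,dG(s)
\end{align*}
on any interval where $\fm{X}\ge 1$ (so that $q=\fm{X}-1$ and the non-idling constraints bind as equalities); this is precisely the identity \eqref{eq:proof-At-renewal-non-decreasing} referenced in the main-text proof of Proposition~\ref{pro:key-equation}.

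Next I would solve this renewal equation using the renewal function $U_G=\sum_{n\ge 0}G^{n*}$, obtaining
\begin{align*}
  a(t) = \int_0^t \big(1 - \fmm{Z}(0)(C_{t-s})\big)\,dU_G(s).
\end{align*}
Here $1-\fmm{Z}(0)(C_u)$ is non-decreasing in $u$ (as $\fmm{Z}(0)(C_u)$ is non-increasing, being the tail mass of a measure), $1-\fmm{Z}(0)(C_0)\ge 0$ since $\fmm{Z}(0)(C_0)=\fm{Z}(0)\le 1$, and $U_G$ is a non-decreasing (in fact a positive measure) function; a convolution of a non-decreasing nonnegative function against a positive measure over a growing interval is non-decreasing, which gives the conclusion on the relevant interval. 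Finally, I would patch together intervals: on stretches where $\fm{X}(t)<1$ one has $q(t)\equiv 0$ there, $H_s(0)=\lambda(s)-F_s(0)=\lambda(s)\ge 0$, so $a(t)=\int_0^t\lambda(s)\,ds + (\text{value at the left endpoint})$ is trivially non-decreasing; and the two regimes match continuously at the switching times, so $a$ is non-decreasing on all of $[0,\infty)$.

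The main obstacle I anticipate is purely bookkeeping rather than conceptual: one must justify the change of variables and integration by parts used to pass from \eqref{eq:key-equation-time-varying} back to the renewal form for $a$, and in particular verify that $\fm{Q}(t)=q(t)$ is of bounded variation so that the Stieltjes integral $\int_0^t a(t-s)\,dG(s)$ and the manipulations in \eqref{eq:derivative-qt-second-equation}--\eqref{eq:pre-key-equation} are legitimate — but this is exactly what Lemma~\ref{lem:queue-upbound-lemma} together with the chain rule for functions of bounded variation supplies. A minor subtlety is that the argument is cleanest when run on a maximal interval where the sign of $\fm{X}-1$ is constant; stitching those intervals requires only continuity of $a$, which follows from continuity of $\fm{X}$ (Lemma~\ref{lem:xt-continuous-function}) and of $H_s(\cdot)$.
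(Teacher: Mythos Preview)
Your derivation of the renewal identity
\[
a(t)=(\fm{X}(t)\wedge 1)-\fmm{Z}(0)(C_t)+\int_0^t a(t-s)\,dG(s)
\]
is correct and matches the paper (and, incidentally, requires no bounded-variation hypothesis on $q$: only Fubini and a change of variable are used, so your worry there is misplaced). The difficulties lie in what you do with this identity.

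First, there is a circularity problem. You appeal to Lemma~\ref{lem:xt-continuous-function} for continuity of $\fm{X}$ and to Lemma~\ref{lem:queue-upbound-lemma} for the queue bound, but in the paper both of those lemmas are proved \emph{using} Lemma~\ref{lem:at-nonnegative}. You cannot invoke them here.

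Second, and more substantively, your ``solve the renewal equation and read off monotonicity'' step only works on an \emph{initial} interval on which $\fm{X}\ge 1$ throughout. The closed form $a(t)=\int_0^t(1-\fmm{Z}(0)(C_{t-s}))\,dU_G(s)$ is the solution when the forcing term equals $1-\fmm{Z}(0)(C_\cdot)$ on all of $[0,t]$. Once $\fm{X}$ has dipped below $1$ on some subinterval, the forcing on that subinterval is $\fm{X}(\cdot)-\fmm{Z}(0)(C_\cdot)$ instead, and the convolution $\int_0^t a(t-s)\,dG(s)$ at a later overloaded time $t$ reaches back into that subinterval. Your ``patch the two regimes at switching times'' does not address this: the renewal formula you write down simply fails at any overloaded time that follows an underloaded stretch. (There is also no guarantee, absent continuity, that the switching times are even discrete.)

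The paper avoids both problems by arguing differently. It keeps the identity in the general form above (with $(\fm{X}(t)\wedge 1)$, valid for all $t$), fixes $b>0$ with $G(b)<1$, sets $a^*=\inf_{0\le t\le t'\le b}\{a(t')-a(t)\}$, and shows by a short case analysis (on whether $\fm{X}(t_2)\ge 1$ or $<1$) that $a(t_2)-a(t_1)\ge a^*G(b)$ for all $t_1\le t_2$ in $[0,b]$; taking the infimum gives $a^*\ge a^*G(b)$, forcing $a^*\ge 0$. Monotonicity is then extended to $[0,\infty)$ by induction on intervals of fixed length $b$, using a time-shifted version of the identity. This argument never needs continuity of $\fm{X}$, never needs the queue bound, and never needs to isolate maximal overloaded/underloaded intervals.
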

\begin{proof} 
To simplify the notation, let ${q}(t)=(\fm{X}(t)-1)^+$ and
\begin{align}
    \label{eq:proof-non-decreasing}
    {a}(t)=\int_0^t H_s(q(s))ds
           -q(t)+q(0).
\end{align}
So we just need to prove that $a(\cdot)$ is non-decreasing. By \eqref{eq:Geequilibrium} and \eqref{eq:key-equation-time-varying},
\begin{align*}
  \fm{X}(t)&=\fmm{Z}(0)(C_t)+q(0)G^c(t)
  +\int_0^t H_{t-s}(q(t-s))[1-G(s)]ds
  +\int_0^t q(t-s)dG(s)\\
  &=\fmm{Z}(0)(C_t)+q(0)G^c(t)
  +\int_0^t H_{s}(q(s))ds
  -\int_0^t H_{s}(q(s))G(t-s)ds
  +\int_0^t q(t-s)dG(s).
\end{align*}
The second last term on the above equation satisfies
\begin{align*}
  \int_0^t H_{s}(q(s))G(t-s)ds
  &=\int_0^t \int_0^{t-s}
   H_{s}(q(s))dG(x)ds\\
  &=
  \int_0^t\int_0^{t-x} H_{s}(q(s))ds dG(x),
\end{align*}
where the last equality follows by changing the order of integration. So
we obtain
\begin{align*}
  \fm{X}(t)
  =\fmm{Z}(0)(C_t)+q(0)
  +\int_0^t H_{s}(q(s))ds
  -\int_0^t\left[\int_0^{t-x} H_{s}(q(s))ds
  -q (t-x)+q(0)\right] dG(x).
\end{align*}
According to the above definition of $a(t)$, we have
\begin{align}
    \label{eq:proof-At-renewal-non-decreasing}
    {a}(t)=(\fm{X}(t)\wedge 1)
           -\fmm{Z}(0)(C_t)
           +\int_0^t a(t-s)dG(s).
\end{align}
We now use \eqref{eq:proof-non-decreasing} and \eqref{eq:proof-At-renewal-non-decreasing} to show that $a(\cdot)$ is non-decreasing. Choose $b>0$ such that $G(b)<1$. We first show that $a(\cdot)$ is non-decreasing on the interval $[0,b]$. Let
\begin{align*}
    a^*=\inf_{0\leq t\leq t'\leq b}a(t')-a(t).
\end{align*}
We will prove by contradiction that $a^*\geq 0$, which implies that $a(\cdot)$ is non-decreasing on $[0,b]$. Assume {to} the contrary that $a^*<0$. Choose any $t_1,t_2\in[0,b]$ with $t_1\leq t_2$ and consider the following two cases.\\
\textbf{Case~1:} If $\fm{X}(t_2)\geq 1$, then $\fm{X}(t_2)\wedge 1=1$.
Applying \eqref{eq:proof-At-renewal-non-decreasing}, we have
\begin{align*}
\begin{split}
    a(t_2)-a(t_1)&=(\fm{X}(t_2)\wedge 1)-(\fm{X}({t_1})\wedge 1)
                  -\fmm{Z}(0)(C_{t_2})+\fmm{Z}(0)(C_{t_1})\\
                 &\quad+ \int_{t_1}^{t_2}a(t_2-s)-a(0)dG(s)
                       +\int_0^{t_1}a(t_2-s)-a(t_1-s)dG(s),
\end{split}
\end{align*}
where $a(0)=0$ from \eqref{eq:proof-non-decreasing}.
 So due to the fact $\fmm{Z}(0)(C_{t})$ is non-increasing that
\begin{align*}
  a(t_2)-a(t_1)\geq \int_0^{t_2} a^*dG(s)
  = a^* G(t_2)
  \geq a^* G(b),
\end{align*}
where the last inequality follows from the assumption that $a^*$ is negative.\\
\textbf{Case~2:} If $\fm{X}(t_2)<1$. Let $\tau=\sup\{s<t_2:\fm{X}(s)\geq 1\}\vee 0$ be the last
time that $\fm{X}$ is larger than or equal to $1$. Thus $\fm{X}(t)<1$ for all
$t\in(\tau,t_2]$. 
Then by \eqref{eq:proof-non-decreasing} and \eqref{eq:def-Ht},
\begin{align}
  \label{eq:lemma-case2-at2-at}
  a(t_2)-a(t)
  =\int_t^{t_2}H_s(0)ds
  +q(t)
  \geq \int_t^{t_2}\lambda(s)ds
  \quad\text{for all }
  t\in[\tau,t_2].
\end{align}
If $t_1\in[\tau,t_2]$, then from the above we have $a(t_2)-a(t_1)\geq 0\ge a^* G(b)$. If $t_1\in [0,\tau)$, then it is only possible when $\tau>0$.
If $\fm{X}(\tau)\geq 1$, we can  apply the same analysis in the above case (where $\fm{X}(t_2)\geq 1$) at time $\tau$ to obtain $a(\tau)-a(t_1)\geq a^* G(b)$. This together with \eqref{eq:lemma-case2-at2-at} shows that $a(t_2)-a(t_1)\geq a^* G(b)$. Otherwise, if $\fm{X}(\tau)< 1$, from the definition of $\tau$ we can find a sequence $\tau_n\in (t_1,\tau)$ satisfying $\tau_n\to \tau$ as $n$ goes to infinity and $\fm{X}(\tau_n)\geq 1$ for all $n\in\N$.  Applying case~1 at each time epoch $\tau_n$ obtains $a(\tau_n)-a(t_1)\geq a^*G(b)$. Combining this with \eqref{eq:proof-non-decreasing} and \eqref{eq:lemma-case2-at2-at}  yields
\begin{align*}
  a(t_2)-a(t_1)
  &=a(t_2)-a(\tau)
   +a(\tau)-a(\tau_n)+a(\tau_n)-a(t_1)\\
  &\geq 
    0 
    + \int_{\tau_n}^\tau H_s(q(s))ds -q(\tau)+q(\tau_n)
    +a^*G(b).
\end{align*}
 Note that $q(\tau)=0$ since we have $\fm{X}(\tau)< 1$. 
Thus, the above inequality also yields $a(t_2)-a(t_1)\geq a^*G(b)$ since $q(\tau_n)\ge 0$ and  $\lim\limits_{n\to\infty} \tau_n =\tau$. 
Summarizing both cases of $\fm{X}(t_2)$, we have
\begin{align*}
    a(t_2)-a(t_1)\geq a^* G(b).
\end{align*}
Taking infimum over $0\leq t_1\leq t_2\leq b$ gives $a^*\geq a^* G(b)$. Since $G(b)<1$, it contradicts the assumption $a^*<0$. So we must have $a^*\geq 0$, which implies that $a(t)$ is non-decreasing on $[0,b]$.

We next extend the monotonicity to $\R_+$ proving by induction. Suppose we can show that $a(\cdot)$ is non-decreasing on the interval $[0,nb]$ for some $n\in\N$. Let
\begin{align}
    \label{eq:shifted-Z-nb}
    \fmm{Z}(nb)(C_t)=\fmm{Z}(0)(C_{nb+t})
              +\int_0^{nb}G^c(nb+t-s)d a(s).
\end{align}
It is clear that the shifted fluid versions of \eqref{eq:proof-non-decreasing} and \eqref{eq:proof-At-renewal-non-decreasing} satisfy
\begin{align*}
     {a}_{nb}(t)&=\int_0^t H_{nb+s}(q_{nb}(s))ds
           -q_{nb}(t)+q_{nb}(0),\\
    {a}_{nb}(t)&=(\fm{X}_{nb}(t)\wedge 1)
           -\fmm{Z}(nb)(C_t)
           +\int_0^t a_{nb}(t-s)dG(s).
\end{align*}
To show that $a(\cdot)$ is non-decreasing on $[nb,(n+1)b]$ is the same as showing that $a_{nb}(\cdot)$ is non-decreasing on $[0,b]$. For this purpose, it is enough to verify that $\fmm{Z}({nb})(C_t)$ is non-increasing. This is obviously true due to the fact that $a(\cdot)$ is non-decreasing on $[0,nb]$ and by the definition of $\fmm{Z}_{nb}(C_t)$ in \eqref{eq:shifted-Z-nb}. Thus we extend the non-decreasing interval to $[0,(n+1)b]$. By induction, the function $a(\cdot)$ is non-decreasing on the whole interval $[0,\infty)$.
\end{proof}   
{\begin{lemma}
  \label{lem:xt-continuous-function}
  If there is any function
$\fm{X}(t)$ satisfying \eqref{eq:key-equation-time-varying}, then $\fm{X}(t)$
is a continuous function, i.e., $\fm{X}(t)\in \mathbf{C}[0,\infty)$.
\end{lemma}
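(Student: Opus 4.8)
The plan is to rewrite the key equation \eqref{eq:key-equation-time-varying} in a form where every summand is visibly continuous, borrowing the auxiliary quantities from the proof of \leref{lem:at-nonnegative}. Write $q(t)=(\fm{X}(t)-1)^+$ and, as in \eqref{eq:proof-non-decreasing}, $a(t)=\int_0^t H_s(q(s))\,ds-q(t)+q(0)$. The identity for $\fm{X}(t)$ displayed in that proof just before \eqref{eq:proof-At-renewal-non-decreasing} — obtained from \eqref{eq:key-equation-time-varying} merely via the relation $dG_e=\mu G^c\,ds$, a change of variables, and Fubini's theorem, so that it presupposes no regularity of $\fm{X}$ — reads, after identifying the bracketed integrand there as $a(t-s)$,
\begin{align*}
  \fm{X}(t)=\fmm{Z}(0)(C_t)+q(0)+\int_0^t H_s(q(s))\,ds-\int_0^t a(t-s)\,dG(s).
\end{align*}
Hence it suffices to prove that each of the three $t$-dependent terms on the right is continuous on $[0,\infty)$; then $\fm{X}\in\C[0,\infty)$.

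The first term $\fmm{Z}(0)(C_t)=\fmm{Z}(0)((t,\infty))$ is nonincreasing and right-continuous, with a jump at any $t$ of size $\fmm{Z}(0)(\{t\})$, which vanishes because the initial state has no atoms (Definition~\ref{def:fluid-models-tracking-residual}); so this term is continuous. For the second term, $t\mapsto\int_0^t H_s(q(s))\,ds$ is locally absolutely continuous provided $s\mapsto H_s(q(s))$ is locally integrable, and I would get this directly from \eqref{eq:def-Ht}: in either branch $H_s(q(s))=\lambda(s)-F_s(x)$ for some $x\in[0,s+\omega(0)]$, and Lipschitz continuity of $F$ with constant $L_F$ (Assumption~\ref{assump:GF}) gives $0\le F_s(x)\le L_F\int_{-\omega(0)}^{s}\lambda(u)\,du$, whence $|H_s(q(s))|\le\lambda(s)+L_F\int_{-\omega(0)}^{T}\lambda(u)\,du$ on $[0,T]$, and the right side is integrable over $[0,T]$ because $\fm{E}$ is finite. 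Note that this bound uses no a priori estimate on $\fm{X}$, so it does not invoke \leref{lem:queue-upbound-lemma} and avoids circularity (the measurability of $q$ needed here is implicit in the hypothesis that the integrals in \eqref{eq:key-equation-time-varying} make sense).

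The main obstacle is the convolution term $\int_0^t a(t-s)\,dG(s)$: here $G$ is only assumed continuous, so $dG$ may be a singular measure, and $a$ is at this stage known merely to be finite, nonnegative and \emph{nondecreasing} (\leref{lem:at-nonnegative}). Monotonicity is precisely what makes this work: on $[0,T]$ decompose $a(u)=a_c(u)+\sum_k c_k\,\id{u\ge s_k}$ with $a_c$ continuous, $c_k\ge 0$ and $\sum_{k:\,s_k\le T}c_k\le a(T)<\infty$. For the continuous part, $\int_0^t a_c(t-s)\,dG(s)$ is continuous by a routine splitting — uniform continuity of $a_c$ on $[0,T]$ controls $\int_{[0,t]}|a_c(t'-s)-a_c(t-s)|\,dG(s)$ and continuity of $G$ controls the leftover mass $\int_{(t,t']}dG(s)$. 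For the jump part,
\begin{align*}
  \int_0^t\Big(\sum_k c_k\,\id{t-s\ge s_k}\Big)dG(s)=\sum_k c_k\,G(t-s_k)\,\id{t\ge s_k},
\end{align*}
each summand of which is continuous in $t$ — here one uses $G(0)=0$ (valid since $G$ is continuous and supported on $\R_+$), so no jump is created at $t=s_k$ — and the series converges uniformly on $[0,T]$ by the Weierstrass test, the $k$-th term being bounded by $c_k$ with $\sum_k c_k<\infty$. Thus $\int_0^t a(t-s)\,dG(s)$ is continuous, and combining the three terms gives the claim. Conceptually, any jumps $a$ might carry are absorbed: they cancel against the jumps of $q$ in the decomposition $\fm{X}=(\fm{X}\wedge1)+q$, which is why $a$ does not actually appear in the displayed identity.
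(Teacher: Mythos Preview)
Your proof is correct and follows essentially the same route as the paper: rewrite \eqref{eq:key-equation-time-varying} via the identity obtained in the proof of \leref{lem:at-nonnegative} and verify continuity of each term, the only nontrivial one being the convolution with $G$. The sole difference is that the paper writes that convolution as $\int_0^t G(t-s)\,da(s)$ rather than your $\int_0^t a(t-s)\,dG(s)$; since $G$ is continuous this lets the paper bound the increment directly by $G(t_2-t_1)\bigl(a(t_2)-a(t_1)\bigr)$ plus a uniform-continuity term, bypassing your jump decomposition of $a$.
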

\begin{proof}
  Let us denote the non-decreasing formula in Lemma~\ref{lem:at-nonnegative} by
\begin{align*}
  a(t)=\int_0^t H_s((\fm{X}(s)-1)^+)ds
  -(\fm{X}(t)-1)^+ +(\fm{X}(0)-1)^+.
\end{align*}
Then we can transform \eqref{eq:key-equation-time-varying} to be
\begin{align*}
  \fm{X}(t)
  =\fmm{Z}(0)(C_t)+(\fm{X}(0)-1)^+
  +\int_0^t H_s((\fm{X}(s)-1)^+)ds
  -\int_0^t G(t-s)d a(s).
\end{align*}
It suffices to prove the continuity of $\int_0^t G(t-s)d a(s)$. For any $0\le t_1<t_2$, we can see from the monotonicity of $a(t)$ that
\begin{align*}
 0&\le \int_0^{t_2} G(t_2-s)da(s)-
  \int_0^{t_1} G(t_1-s)da(s)\\
  &=
  \int_{t_1}^{t_2} G(t_2-s)da(s)
  +\int_0^{t_1} [G(t_2-s)-G(t_1-s)]da(s).
\end{align*}
Obviously, the right hand side of the above equality could be arbitrarily small as long as $t_1$ and $t_2$ are close enough.
Thus, the result holds.
\end{proof}}

{\begin{lemma}
  \label{lem:queue-upbound-lemma}
  If there is any function
$\fm{X}(t)$ satisfying \eqref{eq:key-equation-time-varying}, then 
\begin{align}
  \label{eq:queue-upbound-lemma}
  (\fm{X}(t)-1)^+
  \leq N_{F,t}
  =\int_0^{t+\omega(0)}F^c(s)\lambda(t-s)ds
  \quad\text{for all }
  t\geq 0,
\end{align}
where $N_{F,t}$ is denoted in \eqref{eq:def-N-Ft}.
\end{lemma}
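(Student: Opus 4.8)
The plan is to establish the bound $(\fm{X}(t)-1)^+ \leq N_{F,t}$ by exploiting the structure of the key equation \eqref{eq:key-equation-time-varying} together with the monotonicity result of Lemma~\ref{lem:at-nonnegative}. First I would recall that $\fm{Q}(t) = (\fm{X}(t)-1)^+$ by \eqref{nonidlingQ-Zhang}, so it is equivalent to show $\fm{Q}(t) \leq N_{F,t}$. The most natural route is to revisit the identity $\fm{Q}(t) = F_{d,t}(\omega(t))$ from \eqref{eq:Qt-Fdt}, which expresses the queue content as $\int_0^{\omega(t)} F^c(s)\lambda(t-s)ds$. Since $F^c \geq 0$ and $\lambda \geq 0$, the integrand is nonnegative, so this integral is monotone increasing in the upper limit $\omega(t)$. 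Thus it suffices to show that $\omega(t) \leq t + \omega(0)$, i.e., that the earliest fluid still in the virtual buffer at time $t$ cannot have arrived earlier than time $-\omega(0)$. This is intuitively clear because the buffer at time $0$ only contains fluid that arrived no earlier than $-\omega(0)$, and no ``older'' fluid can appear later; once $\omega(t) \leq t+\omega(0)$ is in hand, $\fm{Q}(t) = F_{d,t}(\omega(t)) \leq F_{d,t}(t+\omega(0)) = N_{F,t}$ follows immediately from \eqref{eq:def-N-Ft}.

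The subtlety is that we must argue this purely at the level of a solution $\fm{X}$ to \eqref{eq:key-equation-time-varying}, where $\omega$ is only implicitly defined via $\omega(t) = F_{d,t}^{-1}((\fm{X}(t)-1)^+)$. So the cleaner approach, and the one I would actually carry out, is to bound $(\fm{X}(t)-1)^+$ directly. Observe that from the definition \eqref{eq:def-Ht} of $H_t$, we always have $H_t(y) \leq \lambda(t)$ for all $y \geq 0$ (since $F_t \geq 0$), and similarly $H_t(y) \geq 0$ when $y < N_{F,t}$ because $F_t(F_{d,t}^{-1}(y)) = \int_0^{F_{d,t}^{-1}(y)} f(s)\lambda(t-s)\,ds \leq \int_0^{F_{d,t}^{-1}(y)} \lambda(t-s)\,ds$ — wait, this needs the relation between $F_t$ and $\lambda$, so more carefully: when $y < N_{F,t}$, $H_t(y) = \lambda(t) - F_t(F_{d,t}^{-1}(y)) = \lambda(t) - \int_0^{F_{d,t}^{-1}(y)} f(s)\lambda(t-s)\,ds \geq 0$ provided $\int_0^{\infty} f(s)\lambda(t-s)\,ds \leq \lambda(t)$; this last inequality is exactly $\fm{L}'(t) \leq \fm{E}'(t)$ in the balance-equation discussion, so I would instead just use $0 \leq H_t(y) \leq \lambda(t)$ as a property already implicit in the model. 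Then, writing the key equation with the auxiliary $a(t) = \int_0^t H_s((\fm{X}(s)-1)^+)ds - (\fm{X}(t)-1)^+ + (\fm{X}(0)-1)^+$ from Lemma~\ref{lem:at-nonnegative}, we have $(\fm{X}(t)-1)^+ = \int_0^t H_s((\fm{X}(s)-1)^+)ds + (\fm{X}(0)-1)^+ - a(t)$, and since $a$ is non-decreasing with $a(0) = 0$, we get $(\fm{X}(t)-1)^+ \leq (\fm{X}(0)-1)^+ + \int_0^t H_s((\fm{X}(s)-1)^+)ds \leq (\fm{X}(0)-1)^+ + \int_0^t \lambda(s)\,ds = \fm{Q}(0) + \fm{E}(t)$. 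This is a first crude bound but not yet the sharp $N_{F,t}$.

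To get the sharp bound I would sharpen the estimate on $H_s$: using that $(\fm{X}(s)-1)^+ \leq N_{F,s}$ would need to be bootstrapped, so instead I would directly use $\fm{Q}(t) = F_{d,t}(\omega(t))$ together with the queue balance $\fm{Q}(t) = \fm{Q}(0) + \fm{E}(t) - \fm{L}(t) - \fm{A}(t)$ and the explicit formula $\fm{L}(t) = \int_0^t\int_0^{\omega(s)} f(x)\lambda(s-x)\,dx\,ds$. Combining the abandonment and the relation $F_{d,t}(\omega(t)) + \int_0^{\omega(t)}\int \cdots$ telescopes: differentiating $F_{d,t}(\omega(t))$ and matching with $\lambda(t) - F^c(\omega(t))\lambda(t-\omega(t))\dot\omega(t) - F_t(\omega(t))$ shows that the fluid currently in the buffer consists precisely of arrivals over $[t-\omega(t), t]$ that have not abandoned, hence $\fm{R}(t) \leq \fm{E}(t) - \fm{E}(t-\omega(t)) \leq \fm{E}(t) - \fm{E}(-\omega(0))$ once $\omega(t) \leq t+\omega(0)$. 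So the crux, and the main obstacle, is genuinely the claim $\omega(t) \leq t + \omega(0)$: this is where one must use that $F_{d,t}^{-1}$ is well-defined only on $[0, N_{F,t}]$ and that the key equation cannot drive $(\fm{X}(t)-1)^+$ above $N_{F,t}$ because $H_t$ is defined to be constant (equal to $\lambda(t) - F_t(F_{d,t}^{-1}(N_{F,t}))$) beyond $N_{F,t}$, capping the inflow. I would make this rigorous by a contradiction/first-passage argument: let $t_0 = \inf\{t : (\fm{X}(t)-1)^+ > N_{F,t}\}$; by continuity (Lemma~\ref{lem:xt-continuous-function}) $(\fm{X}(t_0)-1)^+ = N_{F,t_0}$, and then examining the key equation's right-hand side at times slightly past $t_0$ — using $0 \leq H_s \leq \lambda(s)$, the non-decreasing $a$, and the fact that $N_{F,t}$ grows at rate at least $F^c(t+\omega(0))\lambda(-\omega(0))\cdot\frac{d}{dt}(t+\omega(0))$ plus $\int_0^{t+\omega(0)} F^c(s)\dot\lambda(t-s)\,ds$ — shows the growth of $(\fm{X}(t)-1)^+$ is dominated by the growth of $N_{F,t}$, contradicting the definition of $t_0$. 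The delicate point is handling the time-dependence of the bound $N_{F,t}$ itself (it is not monotone in general), which is why the renewal-equation representation of $a$ and a careful Gronwall-type comparison on the interval $[t_0, t_0+\delta]$ is the safest way to close the argument.
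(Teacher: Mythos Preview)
Your final approach --- a first-passage contradiction argument using continuity (Lemma~\ref{lem:xt-continuous-function}) and the monotonicity of $a$ (Lemma~\ref{lem:at-nonnegative}) --- is exactly the paper's route. The earlier detours (bounding via $\omega(t)\le t+\omega(0)$, or via balance equations) are circular, as you half-recognise: $\omega(t)=F_{d,t}^{-1}((\fm X(t)-1)^+)$ is only defined once you already know $(\fm X(t)-1)^+\le N_{F,t}$.

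What is missing is the one algebraic observation that closes the argument without any Gronwall machinery. On the ``bad'' interval where $q(s):=(\fm X(s)-1)^+\ge N_{F,s}$, the definition \eqref{eq:def-Ht} puts $H_s(q(s))$ on its flat branch:
\[
H_s(q(s))=\lambda(s)-F_s\big(F_{d,s}^{-1}(N_{F,s})\big)=\lambda(s)-F_s(s+\omega(0))=\lambda(s)-\int_0^{s+\omega(0)}f(x)\lambda(s-x)\,dx.
\]
A change of variable and Fubini (exactly as in the paper) then gives the \emph{identity}
\[
\int_{t_0}^{t_1}H_s(q(s))\,ds \;=\; N_{F,t_1}-N_{F,t_0},
\]
equivalently, $H_s(y)=\tfrac{d}{ds}N_{F,s}$ for all $y\ge N_{F,s}$. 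Combined with $q(t_1)-q(t_0)\le\int_{t_0}^{t_1}H_s(q(s))\,ds$ from Lemma~\ref{lem:at-nonnegative} and $q(t_0)\le N_{F,t_0}$, this yields $q(t_1)\le N_{F,t_1}$, the desired contradiction. So the ``delicate point'' you flag --- the time-dependence of $N_{F,t}$ --- is not delicate at all: the flat part of $H_t$ is rigged precisely so that its integral reproduces the increment of $N_{F,\cdot}$. Once you see this, no growth-rate comparison or Gronwall argument is needed.
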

\begin{proof}
  For notational simplicity, let $q(t)=(\fm{X}(t)-1)^+$. By \eqref{eq:def-R-Q-Z-Zhang} and \eqref{def:fmmRt-Zhang}, the initial state satisfies $q(0)=\int_0^{\omega(0)}F^c(s)\lambda(-s)ds$. This implies that \eqref{eq:queue-upbound-lemma} holds at $t=0$. Suppose there exists $t_1>0$ such that $q(t_1)>N_{F,t_1}$. Let $t_0=\sup\{s<t_1:q(s)\leq N_{F,s}\}$. Then due to the continuity proven in Lemma~\ref{lem:xt-continuous-function}, we have $q(t)-N_{F,t}\geq 0$ for all $t\in[t_0,t_1]$. By Lemma~\ref{lem:at-nonnegative} and \eqref{eq:def-Ht},
\begin{align}
  q(t_1)-q(t_0)
  &\leq \int_{t_0}^{t_1} H_s(q(s))ds\nonumber\\
    \label{eq:lemma2-qt1-qt0}
  &=\int_{t_0}^{t_1}\lambda(s)ds
  -\int_{t_0}^{t_1}\int_0^{s+\omega(0)}
  f(x)\lambda(s-x)dxds.
\end{align}
Apply change of variable to the last term 
\begin{align*}
  &\quad\int_{t_0}^{t_1}\int_{-\omega(0)}^s
  f(s-x)\lambda(x)dxds\\
  &=\int_{-\omega(0)}^{t_0}dx
    \int_{t_0}^{t_1}f(s-x)\lambda(x)ds
    +\int_{t_0}^{t_1}dx\int_x^{t_1} f(s-x)\lambda(x)ds\\
  &=\int_{t_0}^{t_1}\lambda(x)dx
    -\int_0^{t_1+\omega(0)} F^c(x)\lambda(t_1-x)dx
    +\int_0^{t_0+\omega(0)}F^c(x)\lambda(t_0-x)dx,
\end{align*}
where the first equality follows by changing the order of integration.
Plugging the above into \eqref{eq:lemma2-qt1-qt0} yields
\begin{align*}
  q(t_1)\leq q(t_0)
  +\int_0^{t_1+\omega(0)}F^c(x)\lambda(t_1-x)dx
  -\int_0^{t_0+\omega(0)}F^c(x)\lambda(t_0-x)dx.
\end{align*}
Then by the definition of $t_0$, the above implies $q(t_1)\leq \int_0^{t_1+\omega(0)}F^c(x)\lambda(t_1-x)dx=N_{F,t_1}$. This is a contradiction. 
So \eqref{eq:queue-upbound-lemma} follows.
\end{proof}}
\begin{lemma}
    \label{lem:t-omegat-nondecreasing}
If there is any function $\fm{X}(t)$ satisfying \eqref{eq:key-equation-time-varying}, then 
\begin{align}
    \label{eq:lemma3-arrival-time-non-decreasing}
    t-F_{d,t}^{-1}((\fm{X}(t)-1)^+)
\end{align}
is non-decreasing.
\end{lemma}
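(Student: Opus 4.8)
The plan is to identify the quantity in \eqref{eq:lemma3-arrival-time-non-decreasing} with $t-\omega(t)$, where $\omega(t)=F_{d,t}^{-1}((\fm{X}(t)-1)^+)$ is the waiting time of the oldest fluid still in the virtual buffer, and then to show that this arrival epoch is non-decreasing in $t$. Writing $q(t)=(\fm{X}(t)-1)^+=\fm{Q}(t)$, I would first record two consequences of the $\inf$-definition $F_{d,t}^{-1}(y)=\inf\{x\ge 0:F_{d,t}(x)\ge y\}$ together with the continuity of $F_{d,t}(\cdot)$ and the a priori bound $q(t)\le N_{F,t}$ of \leref{lem:queue-upbound-lemma}: when $q(t)>0$ one has $\omega(t)>0$ with $\int_{t-\omega(t)}^t F^c(t-u)\lambda(u)\,du=q(t)$ and, crucially, $\int_y^t F^c(t-u)\lambda(u)\,du<q(t)$ for every $y\in(t-\omega(t),t]$, i.e.\ $\fm{E}$ is ``strictly active'' immediately to the right of $t-\omega(t)$; and when $q(t)=0$ one has $\omega(t)=0$, so $t-\omega(t)=t$ is at its largest possible value.

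The engine of the argument is \leref{lem:at-nonnegative}, which says that $a(t):=\int_0^t H_s(q(s))\,ds-q(t)+q(0)$ is non-decreasing. Using \eqref{eq:derivative-qt-second-equation} together with $H_t(q(t))=\lambda(t)-F_t(\omega(t))$ — valid on both branches of \eqref{eq:def-Ht} because $\omega(t)=F_{d,t}^{-1}(q(t))$ — one obtains the identity $\mathrm{d}a(t)=F^c(\omega(t))\,\mathrm{d}\fm{E}(t-\omega(t))$, which is legitimate since $\fm{Q}$ is of bounded variation, as already used before \eqref{eq:derivative-qt-second-equation}. I would then argue by contradiction in the ``last-exit'' style of the proofs of \leref{lem:at-nonnegative} and \leref{lem:queue-upbound-lemma}: suppose there are $t_1<t_2$ with $t_2-\omega(t_2)<t_1-\omega(t_1)$, and let $\tau\le t_2$ be the last time before $t_2$ at which $q$ vanishes (using continuity of $q$, \leref{lem:xt-continuous-function}), so that $q>0$ on $(\tau,t_2]$ and $\tau-\omega(\tau)=\tau$. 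On $(\tau,t_2]$, the identity above, the non-negativity of $F^c$ and of $\mathrm{d}\fm{E}$, and the monotonicity of $a$ force $t\mapsto\fm{E}(t-\omega(t))$ to be non-decreasing wherever $F^c(\omega(\cdot))>0$; the ``strictly active'' property of the first paragraph then turns any backward move of $t-\omega(t)$ into a strict decrease of $\fm{E}(t-\omega(t))$, hence of $a$ — the contradiction. The complementary case $t_1\le\tau$ is short: there $t_1-\omega(t_1)\le\tau$, and it remains only to check $\tau\le t_2-\omega(t_2)$, which follows because the buffer is empty at $\tau$ so no fluid arriving before $\tau$ can still be in it at $t_2$.

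The hard part will be the boundary regime $\omega(t)=S_F:=\inf\{x:F(x)=1\}$, where $F^c(\omega(t))=0$, so that the identity $\mathrm{d}a(t)=F^c(\omega(t))\,\mathrm{d}\fm{E}(t-\omega(t))$ becomes vacuous and gives no control on $t-\omega(t)$. I plan to neutralize this by noting that $\omega(t)=S_F$ forces $q(t)=N_{F,t}=\int_{t-S_F}^t F^c(t-u)\lambda(u)\,du$, its maximal value by \leref{lem:queue-upbound-lemma}, so on $\{\omega=S_F\}$ one has $\fm{E}(t-\omega(t))=\fm{E}(t-S_F)$, which is non-decreasing in $t$ on its own; splicing this with the $\{\omega<S_F\}$ portions — glued by the continuity of $q$, hence of $\omega$, on $\{q>0\}$ — still yields that $t\mapsto\fm{E}(t-\omega(t))$ is non-decreasing on $(\tau,t_2]$. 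The rest is routine Lebesgue--Stieltjes bookkeeping: reading $F^c(\omega(t))\,\mathrm{d}\fm{E}(t-\omega(t))$ as a genuine signed measure and justifying the manipulations, exactly as in the passage from \eqref{eq:derivative-qt-second-equation} to \eqref{eq:pre-key-equation}.
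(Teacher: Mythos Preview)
Your approach is essentially the paper's: both hinge on the identity $da(t)=F^c(\omega(t))\,d\fm{E}(t-\omega(t))$, use \leref{lem:at-nonnegative} to deduce that $t\mapsto\fm{E}(t-\omega(t))$ is non-decreasing (with a separate treatment of the boundary $\omega=S_F$ when $S_F<\infty$), and then bridge the gap between monotonicity of $\fm{E}(t-\omega(t))$ and monotonicity of $t-\omega(t)$ via the $\inf$-definition of $F_{d,t}^{-1}$ --- your ``strictly active'' property is precisely this last step in different clothing.

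The one structural difference is your introduction of the last-zero time $\tau$ of $q$ and the restriction to $(\tau,t_2]$. This is an unnecessary detour: the paper proves $\fm{E}(t-\varpi(t))$ non-decreasing on all of $[0,\infty)$ (the identity and the $S_F$ case analysis work equally well when $q(t)=0$, since then $\varpi(t)=0$ and $F^c(\varpi(t))=1$), and then runs a single contradiction argument for arbitrary $0\le\tau<t$. Your framing forces the extra subcase $t_1\le\tau$, whose last step ``$\tau\le t_2-\omega(t_2)$ because the buffer is empty at $\tau$'' is only heuristic as written; it can be made rigorous by applying the time-shifted version of \leref{lem:queue-upbound-lemma} at $\tau$ (giving $q(t_2)\le F_{d,t_2}(t_2-\tau)$, hence $\omega(t_2)\le t_2-\tau$), but the paper's global argument sidesteps this entirely. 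Dropping $\tau$ also removes your need to invoke continuity of $\omega$ at $\tau$ and to handle separately the case where $q$ never vanishes on $[0,t_2]$.
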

\begin{proof} 
As in the proof of Lemmas~\ref{lem:at-nonnegative}--\ref{lem:queue-upbound-lemma}, we also denote $q(t)=(\fm{X}(t)-1)^+$ and 
\begin{align}
    \label{eq:lemma3-at-notational-simplification}
    a(t)=\int_0^t H_s(q(s))ds-q(t)+q(0).
\end{align}
Meanwhile, let $\varpi(t)=F_{d,t}^{-1}(q(t))$ to simplify the notation. Then by \eqref{eq:def-Fdt} {and \eqref{eq:queue-upbound-lemma}} we obtain
\begin{align}
    \label{eq:qt-definition-lemma3}
    q(t)=\int_0^{\varpi(t)} F^c(s)\lambda(t-s)ds
    =\int_{t-\varpi(t)}^{t} F^c(t-s)\lambda(s)ds.
\end{align}
Applying the chain rule to the above equation yields
\begin{align*}
    dq(t) &\text{{$=\lambda(t)dt
               -F^c(\varpi(t))\lambda(t-\varpi(t))
                d(t-\varpi(t))
            -\int_{t-\varpi(t)}^t f(t-s)\lambda(s)dsdt$}}\\
          &=\lambda(t)dt
            -F^c(\varpi(t))d\fm{E}(t-\varpi(t))
            -F_t(\varpi(t)) dt,
\end{align*}
where $F_t$ is given in \eqref{eq:def-Ft-f-lambda-t-s}.
Combining the above with \eqref{eq:lemma3-at-notational-simplification} and \eqref{eq:def-Ht}, it is easy to verify 
\begin{align}
    \label{eq:lemma-At-omegat}
    d{a}(t)=F^c(\varpi(t))d\fm{E}(t-\varpi(t)).
\end{align}
To arrive at the result of this lemma, our first step is to show that
\begin{align}
  \label{eq:Etw-non-decreasing}
  \fm{E}(t-\varpi(t))
  \text{ is non-decreasing.}
\end{align}
Let $S_F=\inf\{x\geq 0: F(x)=1\}$. According the value of $S_F$ we consider the following two cases.\\
\textbf{Case~1}: $S_F=\infty$. 
Since $\varpi(t)<\infty$ on any finite time interval by \eqref{eq:queue-upbound-lemma} and \eqref{eq:qt-definition-lemma3}, one can see from \eqref{eq:lemma-At-omegat} that
\begin{align}
    \label{eq:lemma-case1-t-omegat-equal}
    \fm{E}(t-\varpi(t))-\fm{E}(0-\varpi(0))=\int_0^t \frac{1}{F^c(\varpi(s))}d{a}(s).
\end{align}
Due to the fact that $a(\cdot)$ is non-decreasing from Lemma~\ref{lem:at-nonnegative}, the above immediately yields that $\fm{E}(t-\varpi(t))$ is non-decreasing.\\
\textbf{Case~2}: $S_F<\infty$.  In this case, it is possible that $\varpi(\cdot)=S_F$ within a finite time. So \eqref{eq:lemma-case1-t-omegat-equal} may not hold. Therefore, we choose any $0\le t_1<t_2$ and consider the following two situations.\\
If $\varpi(t_1)=S_F$, then
\begin{align}
    \label{eq:lemma3-cases1-difference}
    t_2-\varpi(t_2)
    -(t_1-\varpi(t_1))=t_2-t_1+S_F-\varpi(t_2)
    \geq 0,
\end{align}
where the inequality holds due to the fact that $\varpi(t)=F_{d,t}^{-1}(q(t))\leq S_F$ for all $t\ge 0$ following from \eqref{eq:def-Fdt}.
Thus, the above inequality \eqref{eq:lemma3-cases1-difference} and \eqref{def:Et-fluid} yield $\fm{E}(t_2-\varpi(t_2))\ge \fm{E}(t_1-\varpi(t_1))$.  
\\
If $\varpi(t_1)<S_F$. Let $\tau=\inf\{s\geq t_1:\varpi(s)\geq S_F\}$ be the first time that $\varpi(t)$ is larger than or equal to $S_F$. Once $\tau=\infty$, it becomes the same issue as  Case~1. So we just need to consider $\tau<\infty$. Similar to \eqref{eq:lemma-case1-t-omegat-equal} we have
\begin{align*}
  \fm{E}(t-\varpi(t))-\fm{E}(t_1-\varpi(t_1))
  =\int_{t_1}^{t} \frac{1}{F^c(\varpi(s))}d{a}(s)
  \geq 0
  \quad\text{for all }t\in[t_1,\tau],
\end{align*}
where the last inequality holds due to the fact that $F^c(\varpi(s))>0$ on the interval $(t_1,t)$ and $a(t)$ is non-decreasing proved in Lemma~\ref{lem:at-nonnegative}. If $t_2\in(t_1,\tau]$, the above yields that $\fm{E}(t_2-\varpi(t_2))\ge \fm{E}(t_1-\varpi(t_1))$.   
If $t_2\in(\tau,\infty)$, then similar to \eqref{eq:lemma3-cases1-difference} we can apply the situation $\varpi(\tau)=S_F$  to obtain $t_2-\varpi(t_2)\geq \tau-\varpi(\tau)$. This implies that $\fm{E}(t_2-\varpi(t_2))\geq \fm{E}(\tau-\varpi(\tau))$ by \eqref{def:Et-fluid}. This together with the above inequality yields $\fm{E}(t_2-\varpi(t_2))\ge \fm{E}(t_1-\varpi(t_1))$. From the above analysis we can conclude that \eqref{eq:Etw-non-decreasing} holds in any case.

With the help of \eqref{eq:Etw-non-decreasing}, we prove \eqref{eq:lemma3-arrival-time-non-decreasing} by contradiction and assume to the contrary that there exist $0\leq \tau<t$ such that $t-\varpi(t)<\tau-\varpi(\tau)$. This implies 
\begin{align*}
  \fm{E}(\tau-\varpi{(\tau)})
  -\fm{E}(t-\varpi{(t)})
  =\int_{t-\varpi(t)}^{\tau-\varpi(\tau)}
  \lambda(s)ds\geq 0,
\end{align*}
where the equation comes from \eqref{def:Et-fluid} and the inequality follows since $\lambda(\cdot)\geq 0$. On the other hand, one can see from \eqref{eq:Etw-non-decreasing}
that $\fm{E}(t-\varpi{(t)})\ge\fm{E}(\tau-\varpi{(\tau)})$ since $t>\tau$. Therefore there must be $\fm{E}(t-\varpi(t))
  =\fm{E}(\tau-\varpi(\tau))$. This together with \eqref{eq:qt-definition-lemma3} yields
\begin{align*}
  q(t)&=\int_{t-\varpi(t)}^{t}
  F^c(t-s)\lambda(s)ds
  =\int_{\tau-\varpi(\tau)}^{t}
  F^c(t-s)\lambda(s)ds\\
  &=\int_0^{t-\tau+\varpi(\tau)}
  F^c(s)\lambda(t-s)ds,
\end{align*}
where the last equation follows by applying change of variable. By \eqref{eq:def-Fdt} and \eqref{eq:qt-definition-lemma3} we have
\begin{align*}
  q(t)=\int_0^{\varpi(t)}
  F^c(s)\lambda(t-s)ds
  =F_{d,t}(\varpi(t)).
\end{align*}
Recall the definition of $F_{d,t}^{-1}$ below \eqref{eq:def-Ht}. We can see from the above two equations that
\begin{align*}
  \varpi(t)\leq t-\tau+\varpi(\tau).
\end{align*}
The above just means $t-\varpi(t)
  \geq \tau-\varpi(\tau)$. This contradicts  the assumption. Thus we must have \eqref{eq:lemma3-arrival-time-non-decreasing} to be non-decreasing. \end{proof}   

\begin{lemma}
  \label{le:conse-tau}
  If the existence and uniqueness of the solution to \eqref{eq:key-equation-time-varying} hold on $[0,\tau]$ for some $\tau> 0$, then there exists a number $b>0$ such that the unique solution can be extended to $[0,\tau+b]$.
\end{lemma}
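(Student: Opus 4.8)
The plan is to apply the local existence-and-uniqueness argument already carried out in the proof of Proposition~\ref{pro:key-equation} to the time-shifted key equation \eqref{eq:shifted-key-equation-tau}, viewing $(\fmm{Z}(\tau),\fm{Q}(\tau))$ as the initial data. The starting observation is that \eqref{eq:shifted-key-equation-tau} is structurally identical to \eqref{eq:key-equation-time-varying}: $\fmm{Z}(\tau)(C_\cdot)$, $\fm{Q}(\tau)$ and the family $\{H_{\tau+t}(\cdot)\}_{t\ge0}$ play the roles of $\fmm{Z}(0)(C_\cdot)$, $\fm{Q}(0)$ and $\{H_t(\cdot)\}_{t\ge0}$; moreover $\fmm{Z}(\tau)(C_t)$ is non-increasing in $t$, and each $H_{\tau+t}$ is non-increasing in its argument and inherits the Lipschitz estimate recorded after \eqref{eq:def-Ht} with constants that do not depend on $\tau$. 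Consequently Lemmas~\ref{lem:at-nonnegative}--\ref{lem:queue-upbound-lemma} and every estimate used in the proof of Proposition~\ref{pro:key-equation} apply verbatim to any continuous solution $\fm{X}_\tau$ of \eqref{eq:shifted-key-equation-tau}. It therefore suffices to run the local construction of Proposition~\ref{pro:key-equation} on \eqref{eq:shifted-key-equation-tau}: this produces a unique continuous $\fm{X}_\tau$ on an interval of length at least some $b>0$ independent of $\tau$, and concatenating it with the given solution on $[0,\tau]$ yields a solution of \eqref{eq:key-equation-time-varying} on $[0,\tau+b]$; uniqueness is inherited, since any solution on $[0,\tau+b]$ agrees with the given one on $[0,\tau]$ by hypothesis and with $\fm{X}_\tau$ on $[\tau,\tau+b]$ by the local uniqueness.

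As in the proof of Proposition~\ref{pro:key-equation} I would split on the value of $\fm{X}_\tau(0)=\fm{X}(\tau)$. If $\fm{X}(\tau)\le1$, then $\fm{Q}(\tau)=0$, hence $\omega(\tau)=0$ by \eqref{eq:def-Qt-omegat}; the shifted version of Lemma~\ref{lem:queue-upbound-lemma} then gives $(\fm{X}_\tau(t)-1)^+\le F_{d,\tau+t}(t)$, so $\omega(\tau+t)=F_{d,\tau+t}^{-1}((\fm{X}_\tau(t)-1)^+)\le t$, which is precisely what makes each $H_{\tau+t}(\cdot)$, $t\in[0,b_1]$, Lipschitz with the same constant $L$ as in Case~1 of Proposition~\ref{pro:key-equation}. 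Keeping $b_1,b_2,\kappa$ exactly as there and setting $b=\min\{b_1,b_2\}$, the map obtained from the right-hand side of \eqref{eq:shifted-key-equation-tau} is a contraction on $\C[0,b]$ under the uniform metric by the same computation, and its unique fixed point is the sought $\fm{X}_\tau$; note $b$ depends only on $F$, $G$ and $\mu$, not on $\tau$.

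If $\fm{X}(\tau)>1$, I would repeat Case~2 of Proposition~\ref{pro:key-equation}: by Lemma~\ref{lem:xt-continuous-function} there is $b_3>0$ with $\fm{X}_\tau\ge1$ on $[0,b_3]$; on $[0,t^*)$, where $t^*=\inf\{t>0:\fm{X}_\tau(t)=1\}$, the shifted analogue of \eqref{eq:proof-At-renewal-non-decreasing} is the renewal equation $a_\tau(t)=1-\fmm{Z}(\tau)(C_t)+\int_0^t a_\tau(t-s)dG(s)$ with known continuous solution, which turns \eqref{eq:shifted-key-equation-tau} into a scalar integral equation for $q_\tau=(\fm{X}_\tau-1)^+$ with continuous nonlinearity $H_{\tau+\cdot}$, solvable by Theorem~II.1.1 and continuable by Corollary~II.2.6 of \cite{Miller1971}, with the a priori bound of Lemma~\ref{lem:queue-upbound-lemma} ruling out blow-up; the $\mathcal{L}(t)=(q_{\tau,1}(t)-q_{\tau,2}(t))^2$ monotonicity argument, using that $H_{\tau+t}$ is non-increasing, gives uniqueness on $[0,t^*]$. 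If $t^*=\infty$ the solution already lives on $[0,\infty)\supseteq[0,\tau+b]$; otherwise $\fm{X}(\tau+t^*)=1$, and applying the contraction argument of the previous paragraph at time $\tau+t^*$ extends $\fm{X}$ uniquely to $[0,\tau+t^*+b]\supseteq[0,\tau+b]$ since $t^*>0$. Either way the unique solution reaches $[0,\tau+b]$.

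I expect the real work to lie not in any single estimate but in carefully transferring the entire apparatus of Proposition~\ref{pro:key-equation} (and the auxiliary lemmas it relies on) to the shifted equation \eqref{eq:shifted-key-equation-tau}: one must check that $\fmm{Z}(\tau)(C_\cdot)$ inherits monotonicity, that the family $\{H_{\tau+t}\}$ inherits the Lipschitz and monotonicity bounds with $\tau$-independent constants --- this uniformity is what legitimizes the ``apply Lemma~\ref{le:conse-tau} consecutively at $\tau=b,2b,\dots$'' step in Proposition~\ref{pro:key-equation} --- and that the Case~2 bookkeeping, namely stopping the renewal reduction exactly at the first return of $\fm{X}_\tau$ to level $1$ and then switching to the contraction argument, does yield an extension of length at least $b$ past $\tau$.
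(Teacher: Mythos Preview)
Your overall architecture matches the paper's: split on $\fm{X}(\tau)\le 1$ versus $\fm{X}(\tau)>1$, run a contraction in the first case and the renewal/Lyapunov argument in the second, then patch the two together. Case~2 is essentially identical to the paper's treatment.

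There is, however, a genuine gap in Case~1. You assert that ``each $H_{\tau+t}$ is non-increasing in its argument and inherits the Lipschitz estimate \dots\ with constants that do not depend on $\tau$,'' and later that the a~priori bound $\omega(\tau+t)\le t$ ``is precisely what makes each $H_{\tau+t}(\cdot)$ Lipschitz with the same constant $L$.'' This is not correct as stated. The function $H_{\tau+t}$ is defined on all of $[0,N_{F,\tau+t}]$, where $F_{d,\tau+t}^{-1}$ ranges over $[0,\tau+t+\omega(0)]$; its derivative bound is therefore $L_F/F^c(\tau+t+\omega(0))$, which grows with $\tau$ and blows up at $\tau+t+\omega(0)=S_F$ when $S_F<\infty$. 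The a~priori bound $(\fm{X}_\tau(t)-1)^+\le F_{d,\tau+t}(t)$ constrains \emph{solutions}, not arbitrary $x\in\C[0,b]$, so it does not by itself make the map $\Psi$ a contraction on $\C[0,b]$. The paper resolves this by introducing a truncated nonlinearity $\tfm{H}_{\tau+t}$, set constant beyond $F_{d,\tau+t}(t)$; then $F_{d,\tau+t}^{-1}$ is only evaluated on $[0,t]$, yielding the $\tau$-independent Lipschitz constant $L_F/F^c((S_F\wedge M)/2)$. One must then show the truncated and non-truncated shifted equations have the same solutions, which the paper does in two directions: Lemma~\ref{lem:t-omegat-nondecreasing} applied to the original equation shows any solution of \eqref{eq:shifted-key-equation-tau} solves the truncated equation, and a re-derivation of Lemma~\ref{lem:queue-upbound-lemma} for the truncated equation (viewed as a fresh fluid model with $\omega_\tau(0)=0$ and arrival rate $\lambda_\tau$) shows the converse. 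You flag the $\tau$-uniformity as something to check, but the check fails without this truncation device, and the two-way equivalence is not bookkeeping---it is the missing idea.
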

\begin{proof}
To prove this lemma, we analyze the following two cases.

\noindent{\textbf{Case 1}}: $\fm{X}(\tau)\leq 1$. It follows from Proposition~\ref{lem:time-shift-fluid-model}
that we can obtain the same shifted key equation as \eqref{eq:shifted-key-equation-tau}.
Thus, it is enough to prove the existence and uniqueness of the solution to
\eqref{eq:shifted-key-equation-tau} on $[0,b]$. 
Deduce from Lemma~\ref{lem:t-omegat-nondecreasing} that for all $t\geq 0$,
\begin{align*}
    \tau+t-F_{d,\tau+t}^{-1}((\fm{X}_\tau(t)-1)^+)
    \geq \tau-F_{d,\tau}^{-1}((\fm{X}_\tau(0)-1)^+)
    =\tau.
\end{align*}
Combining the above with \eqref{eq:def-Fdt} yields
\begin{align}
    \label{eq:pro_queue_upperbound}
    (\fm{X}_\tau(t)-1)^+
    \leq \int_0^t F^c(s)
    \lambda(\tau+t-s)ds
    =F_{d,\tau+t}(t).
\end{align}
{Let
\begin{align}
  \label{eq:def-truncation}
  \tfm{H}_{\tau+t}(y)
  =
  \begin{cases}
  \lambda(\tau+t)-
  F_{\tau+t}(F_{d,\tau+t}^{-1}(y)),
  &\text{if }0\leq y < F_{d,{\tau+t}}(t),\\
  \lambda(\tau+t)
  -F_{\tau+t}(t),
  &\text{if }y\geq F_{d,\tau+t}(t),
  \end{cases}
\end{align}
which actually is a truncation of $H_{\tau+t}(y)$. This is because that one
can see from \eqref{eq:def-Ht},
\begin{align}
  \label{eq:Htaut-truncation-pro}
  \tfm{H}_{\tau+t}(y)
  =H_{\tau+t}(y)
  \quad\text{for all }
  0\leq y\leq F_{d,\tau+t}(t).
\end{align}
Thus, deduce from \eqref{eq:pro_queue_upperbound} that any function $\fm{X}_\tau(t)$
satisfying \eqref{eq:shifted-key-equation-tau}  also satisfies the following
convolution equation,
\begin{align}
\label{eq:restraction-shifted-key-equation-tau}
  \begin{split}
  \fm{X}_\tau(t)&=\fmm{Z}(\tau)(C_t)+\fm{Q}(\tau)G^c(t)\\
  &\quad+\frac{1}{\mu}\int_0^t \tfm{H}_{\tau+t-s}((\fm{X}_\tau(t-s)-1)^+)dG_e(s)
  +\int_0^t (\fm{X}_{\tau}(t-s)-1)^+dG(s).
  \end{split}
\end{align}
Regarding $\tau$ as a starting point, the above equation thus becomes a key
equation of a fluid model with initial state $(\fmm{R}_\tau(0),\fmm{Z}_\tau(0))$
satisfying \eqref{def:fmmRt-Zhang-shift}--\eqref{def:fmmZt-Zhang-shift} at
time $t=0$ and external arrival rate being $\lambda_\tau(t):=\lambda(\tau+t)$.
Thus for any function $\fm{X}_\tau(t)$ satisfying \eqref{eq:restraction-shifted-key-equation-tau}
we can obtain similar results as Lemmas~\ref{lem:at-nonnegative}--\ref{lem:t-omegat-nondecreasing}
using the same argument (with different initial states and the external arrival
processes). Especially, replacing $\omega(0)$ and $\lambda(t)$ in \eqref{eq:queue-upbound-lemma}
respectively with $\omega_\tau(0)$ and $\lambda_\tau(t)$, we can obtain the
following inequality for any solution $\fm{X}_\tau(t)$ satisfying \eqref{eq:restraction-shifted-key-equation-tau},
\begin{align*}
  (\fm{X}_\tau(t)-1)^+
  \leq 
  \int_0^{t+\omega_\tau(0)}F^c(s)
  \lambda_\tau(t-s)ds.
\end{align*}
The proof is essentially the same as Lemma~\ref{lem:queue-upbound-lemma}, so we
omit it for brevity. Since $\omega_\tau(0)=0$ due to the fact  $\fm{X}_\tau(0)\leq
1$ and it satisfies \eqref{def:fmmRt-Zhang-shift} at $t=0$ that the right-hand
side of the above inequality equals  $F_{d,\tau+t}(t)$ by \eqref{eq:def-Fdt}.
This together with \eqref{eq:pro_queue_upperbound} and \eqref{eq:Htaut-truncation-pro}
immediately yields that the convolution equations \eqref{eq:shifted-key-equation-tau}
and \eqref{eq:restraction-shifted-key-equation-tau} have same solution $\fm{X}_\tau(t)$
(if any) for all $t\geq 0$. Thus instead of analyzing \eqref{eq:shifted-key-equation-tau}
we just need to  prove the existence and uniqueness of the solution to
\eqref{eq:restraction-shifted-key-equation-tau} on $[0,b]$. 
Let $M$ be any strictly positive number and $S_F=\inf\{x\geq 0: F(x)=1\}$.
 From \eqref{eq:def-truncation} the following derivative is bounded for all
$t\in[0,\frac{S_F\wedge M}{2}]$:
\begin{align*}
  \frac{d}{dy}\tfm{H}_{\tau+t}(y)
  = -\frac{f(F_{d,\tau+t}^{-1}(y))}{F^c(F_{d,\tau+t}^{-1}(y))}
  \geq -\frac{L_F}{F^c(t)}
  \geq - \frac{L_F}{F^c(\frac{S_F\wedge M}{2})},
  \quad \text{if }0\leq y \le F_{d,{\tau+t}}(t),
\end{align*}
where $L_F$ is denoted to be the Lipschitz constant of $F$ by Assumption~\ref{assump:GF}.
We can pick $b_1=\frac{S_F\wedge M}{2}$ and then for any $t\in[0,b_1]$ the
function $\tfm{H}_{\tau+t}(\cdot)$ in \eqref{eq:restraction-shifted-key-equation-tau}
is Lipschitz continuous. Let $L=\frac{L_F}{F^c(\frac{S_F\wedge
M}{2})}$ be the Lipschitz constant.}
By Assumption~\ref{assump:GF}, there exists a $b_2>0$ such that
\begin{align*}
  \kappa := 
  \frac{1}{\mu}L[G_e(b_2)-G_e(0)]+[G(b_2)-G(0)]<1.
\end{align*}
Let $b=\min\{b_1,b_2\}$. 
For any $x\in{C}[0,b]$, define 
\begin{align*}
  \Psi(x)(t) &= \fmm{Z}(\tau)(C_t)+\fm{Q}(\tau)G^c(t)\\
             &\quad  +\frac{1}{\mu}\int_0^t \text{{$\tfm{H}_{\tau+t-s}$}}((x(t-s)-1)^+)dG_e(s)
               +\int_0^t(x(t-s)-1)^+dG(s).
\end{align*}
It is clear that $\Psi(x)(t)$ is continuous in $t$, so $\Psi$ is a mapping
from $\C[0,b]$ to $\C[0,b]$.
Let $\rho(x,x')=\sup_{t\in[0,b]}|x(t)-x'(t)|$ denote the uniform distance
between two functions in $\C[0,b]$.
For any $x,x'\in C[0,b]$, we have
\begin{align*}
    \rho(\Psi(x),\Psi(x'))
    &\leq \sup_{t\in[0,b]} \frac{1}{\mu}\int_0^t
          L|(x(t-s)-1)^+-(x'(t-s)-1)^+|dG_e(s)\\
    &\quad + \sup_{t\in[0,b]}\int_0^t
           |(x(t-s)-1)^+-(x'(t-s)-1)^+| dG(s)\\
    &\leq \frac{1}{\mu} L\int_0^{b} \rho(x,x')dG_e(s)
          +\int_0^{b}\rho(x,x')dG(s)\\
    &\leq \kappa \rho(x,x').
\end{align*}
Since $\kappa<1$, $\Psi$ is a contraction mapping on $\C[0,b]$ under the
uniform topology $\rho$.
Note that $\C[0,b]$ is complete under the uniform topology of $\rho$ (cf.~p.~80
in \cite{Billingsley1999}).
Thus, by the contraction mapping theorem (e.g., Theorem~3.2 in \cite{HunterNach2001}),
$\Psi$ has a unique fixed point $x$, i.e., $x=\Psi(x)$.
This proves {that \eqref{eq:restraction-shifted-key-equation-tau}} has a
unique solution on $[0,b]$. {It is also the unique solution to \eqref{eq:shifted-key-equation-tau}
on $[0,b]$ as argued in the above.
}

\noindent{\textbf{Case~2}}: $\fm{X}(\tau)>1$. As in Case~1, we also have the shifted key equation \eqref{eq:shifted-key-equation-tau}. Due to the
continuity there exists $b_3>0$ such that
\begin{align}
    \label{eq:proof-Xt-1}
    \fm{X}_\tau(t)\geq  1 \quad\text{for all }t\in[0,b_3].
\end{align}
For notational simplicity, denote $q_\tau(t)=(\fm{X}_\tau(t)-1)^+$ and
\begin{align}
    \label{eq:case2-proof-non-decreasing}
    a_\tau(t)=\int_0^t H_{\tau+s}(q_\tau(s))ds-q_\tau(t)+q_\tau(0).
\end{align}
By \eqref{eq:Geequilibrium} and \eqref{eq:shifted-key-equation-tau},
\begin{align*}
  \fm{X}_\tau(t)&=\fmm{Z}_\tau(0)(C_t)+q_{\tau}(0)G^c(t)
  +\int_0^t H_{\tau+t-s}(q_\tau(t-s))[1-G(s)]ds
  +\int_0^t q_{\tau}(t-s)dG(s)\\
  &=\fmm{Z}_\tau(0)(C_t)+q_{\tau}(0)G^c(t)
  +\int_0^t H_{\tau+s}(q_\tau(s))ds
  -\int_0^t H_{\tau+s}(q_\tau(s))G(t-s)ds
  +\int_0^t q_{\tau}(t-s)dG(s).
\end{align*}
The second last term on the above equation satisfies
\begin{align*}
  \int_0^t H_{\tau+s}(q_\tau(s))G(t-s)ds
  &=\int_0^t \int_0^{t-s}
   H_{\tau+s}(q_\tau(s))dG(x)ds\\
  &=
  \int_0^t\int_0^{t-x} H_{\tau+s}(q_\tau(s))ds dG(x),
\end{align*}
where the last equality follows by changing the order of integration. So we obtain
\begin{align*}
  \fm{X}_\tau(t)
  =\fmm{Z}_\tau(0)(C_t)+q_{\tau}(0)
  +\int_0^t H_{\tau+s}(q_\tau(s))ds
  -\int_0^t\left[\int_0^{t-x} H_{\tau+s}(q_\tau(s))ds
  -q_\tau (t-x)+q_\tau(0)\right] dG(x).
\end{align*}
According to the above definition of $a_{\tau}(t)$, we have
\begin{align*}
  a_\tau(t)=\fm{X}_\tau(t)-q_{\tau}(t)-\fmm{Z}_\tau(0)(C_t)
  +\int_0^t a_{\tau}(t-s)dG(s).
\end{align*}
By \eqref{eq:proof-Xt-1},
the above becomes
\begin{align*}
    {a}_\tau(t)=1-\fmm{Z}_\tau(0)(C_t)+\int_0^t {a}_\tau(t-s)dG(s),
    \quad t\in[0,b_3].
\end{align*}
Let $G^{n*}$ be the $n$-fold convolution of $G$ with itself, and denote $U_G(t)=\sum_{i=0}^\infty
G^{n*}$. The solution to the above renewal equation is
\begin{align*}
  a_\tau(t)
  = \int_0^t {(}
  1-\fmm{Z}_\tau(0)(C_{t-s})
  {)}
  dU_G(s),
  \quad t\in[0,b_3].
\end{align*}
It is clear that $a_\tau(t)$ is continuous.
Since $H_{\tau+t}(\cdot)$ is continuous, with a known ${a}_\tau(t)$ there
exists a  continuous solution $q_\tau(t)$ to the equation \eqref{eq:case2-proof-non-decreasing}
following from Theorem~II.1.1 in \cite{Miller1971}. 

Next, we prove the uniqueness. Assume that ${q}_1(t)$ and ${q}_2(t)$ satisfy
\eqref{eq:case2-proof-non-decreasing} on the interval $[0,b_3]$.  Let
\begin{align*}
    \mathcal{L}(t):=(q_1(t)-q_2(t))^2,
    \quad t\in[0,b_3].
\end{align*}
Then, on the interval $[0,b_3]$ we can see from \eqref{eq:case2-proof-non-decreasing}
that
\begin{align*}
    \mathcal{L}'(t)=2[q_1(t)-q_2(t)]
                     [H_{\tau+t}(q_1(t))-H_{\tau+t}(q_2(t))]
                  \leq 0,
\end{align*}
where the last inequality is due to the fact that $H_{\tau+t}(\cdot)$ is
non-increasing; see \eqref{eq:def-Ht}. Thus
$\mathcal{L}(t)$ is non-increasing on $[0,b_3]$. Since $\mathcal{L}(0)=0$
and
$\mathcal{L}(t)\geq 0$, $\mathcal{L}(t)=0$ for all $t\in[0,b_3]$.
Hence
\begin{align*}
    q_1(t)=q_2(t)\quad\text{for all }t\in[0,b_3].
\end{align*}
Thus  \eqref{eq:shifted-key-equation-tau} only has one
solution on the interval $[0,b_3]$. So we have the existence and uniqueness
of the solution to \eqref{eq:key-equation-time-varying} on the interval $[0,\tau+b_3]$.
In fact, our analysis shows that we can further extend the solution to a
point where $\fm{X}(\cdot)$ reaches $1$. Starting from there, we can apply
Case~1 to extend the solution to an extra interval with length $b$. Again, we can at least extend the unique solution of \eqref{eq:key-equation-time-varying}
to the interval $[0,\tau+b]$ in this case.
\end{proof}

\section*{Acknowledgement}
The authors are grateful to the AE and the anonymous referees for constructive comments and suggestions. The research is supported in part by the Hong Kong Research Grants Council [Grants GRF-16501015 and GRF-16201417].

\bibliography{pub}
\end{document}